\documentclass[reqno]{amsart}
\usepackage{amsthm,amsmath,amssymb}
\usepackage{stmaryrd,mathrsfs}
\usepackage[T1]{fontenc}
\usepackage{esint}

\allowdisplaybreaks


%
%

\newcommand{\ud}[0]{\,\mathrm{d}}

\newcommand{\abs}[1]{|#1|}
\newcommand{\Babs}[1]{\Big|#1\Big|}
\newcommand{\norm}[2]{|#1|_{#2}}

\newcommand{\Norm}[2]{\|#1\|_{#2}}

\newcommand{\BNorm}[2]{\Big\|#1\Big\|_{#2}}
\newcommand{\pair}[2]{\langle #1,#2 \rangle}

\newcommand{\ave}[1]{\langle #1\rangle}


\newcommand{\bddlin}[0]{\mathscr{L}}

\newcommand{\BMO}[0]{\operatorname{BMO}}
\newcommand{\supp}[0]{\operatorname{supp}}


\newcommand{\sign}[0]{\operatorname{sgn}}

\newcommand{\R}{\mathbb{R}}

\newcommand{\N}{\mathbb{N}}
\newcommand{\Z}{\mathbb{Z}}
\newcommand{\T}{\mathbb{T}}



\def\cyr{\fontencoding{OT2}\fontfamily{wncyr}\selectfont}
\DeclareTextFontCommand{\textcyr}{\cyr}

\swapnumbers 

\numberwithin{equation}{section}

\theoremstyle{plain}
\newtheorem{theorem}[equation]{Theorem}
\newtheorem{proposition}[equation]{Proposition}

\newtheorem{lemma}[equation]{Lemma}

\theoremstyle{definition}

\theoremstyle{remark}
\newtheorem{remark}[equation]{Remark}

\makeatletter
\@namedef{subjclassname@2010}{%
  \textup{2010} Mathematics Subject Classification}
\makeatother

\usepackage{hyperref} 
\hypersetup{
    colorlinks=true,       
    linkcolor=blue,          
    citecolor=magenta,        
    filecolor=magenta,      
    urlcolor=cyan           
}

%
%

\begin{document}

\title[Convergence of Walsh--Fourier series]{Pointwise convergence of Walsh--Fourier series of vector-valued functions}

\author[T. P. Hyt\"onen]{Tuomas P. Hyt\"onen}
\address{Department of Mathematics and Statistics, P.O.B.~68 (Gustaf H\"all\-str\"omin katu~2b), FI-00014 University of Helsinki, Finland}
\email{tuomas.hytonen@helsinki.fi}
\thanks{T.H. is supported by the European Union through the ERC Starting Grant ``Analytic-probabilistic methods for borderline singular integrals'', and by the Academy of Finland, projects 130166 and 133264.}

\author[M. T. Lacey]{Michael T. Lacey}
\address{School of Mathematics \\
Georgia Institute of Technology \\
Atlanta GA 30332 }
\email{lacey@math.gatech.edu}
\thanks{M.L. supported in part by the NSF grant 0968499, and a grant from the Simons Foundation (\#229596 to Michael Lacey). }

\date{\today}

\subjclass[2010]{42B20, 46E40}


\begin{abstract}{
We prove a version of Carleson's Theorem in the Walsh model for vector-valued functions:
For $1<p< \infty$, and a UMD space $Y$, the Walsh-Fourier series of $f \in L ^{p}(0,1;Y)$ converges pointwise, provided that  $Y$ is a complex interpolation space $Y=[X,H]_\theta$ between another UMD space $X$ and a Hilbert space $H$, for some $\theta\in(0,1)$. Apparently, all known examples of UMD spaces satisfy this condition.  }
\end{abstract}

\maketitle

\section{Introduction}

We are interested in the vector-valued extension of Carleson's celebrated theorem on pointwise convergence of Fourier series \cite{Carleson:66}, or more precisely, in this paper, on the variant due to Billard \cite{Billard} about Walsh--Fourier series. By `vector-valued' we understand functions that take their values in a possibly infinite-dimensional Banach space $X$. It is well known that the most general setting in which such results could be hoped for is when $X$ is a UMD (unconditionality of martingale differences) space.

So far, vector-valued pointwise convergence results of this nature only exist in the more restricted class of UMD spaces with an unconditional basis, or somewhat more generally, in UMD lattices. Indeed, Carleson's theorem in such spaces was proven by Rubio de Francia \cite{RdF:Studia,RdF:LNM}, and Billard's theorem by Weisz \cite{Weisz:Vilenkin}, who also treated the more general Vilenkin--Fourier series. (The abstract and the MR review of the last-mentioned paper misleadingly claim the result for UMD spaces, although it is only proven assuming an unconditional basis.) All these results ultimately rely on the classical Carleson (or Billard) theorem as a black box: the scalar-valued boundedness of the relevant maximal partial sum operator $S^*$ is applied component-wise in the unconditional basis (or pointwise in a representation of the lattice as a function space).


Rubio de Francia explicitly raised the following question \cite[Problem 4 on p.~220]{RdF:LNM}:
\begin{quote}
  It would be interesting to know if $B$-valued Fourier series converge a.e.\ for $B\in UMD$ ($B$ not a lattice), e.g., for the Schatten ideals: $B=C_p$, $1<p<\infty$.
\end{quote}
Apparently, no published progress on this was made in the last 25 years until the recent proof of the `little Carleson theorem' in general UMD spaces by Parcet, Soria and Xu \cite{PSX}: the sequence of partial sum $S_n f(x)$ of the Fourier series of $f\in L(\log L)^{1+\delta}(\T;X)$ grows at most at the rate $o(\log\log n)$ for a.e.~$x\in\T$. They adapt Carleson's original argument \cite{Carleson:66}, rather than just his result, to this vector-valued question.

In this paper, we obtain the first partial answer to the actual convergence issue. We prove the pointwise convergence of $Y$-valued Walsh--Fourier series for all UMD spaces $Y$ of the following special form: $Y$ is a complex interpolation space $Y=[X,H]_\theta$ between another UMD space $X$ and a Hilbert space $H$, where $\theta\in(0,1)$. This includes all UMD lattices \cite[Corollary on p.~216]{RdF:LNM}. It also includes the Schatten ideals $C_p$, $p\in(1,\infty)$, specifically raised in Rubio de Francia's question (for we can always pick another $q\in(1,\infty)$ so that $C_p=[C_q,C_2]_\theta$), and apparently all other known examples of UMD spaces as well. In fact, Rubio de Francia also asked \cite[Problem 4 on p.~220]{RdF:LNM}:
\begin{quote}
  Is every $B\in UMD$ intermediate between a ``worse'' $B_0\in UMD$ and a Hilbert space?
\end{quote}
This question also remains open. A possible affirmative answer, in combination with our present contribution, would yield the pointwise convergence of $X$-valued Walsh--Fourier series for every UMD space $X$. Conversely, a counterexample to the pointwise convergence result would be a counterexample to the mentioned interpolation property.

Rubio de Francia's class of intermediate UMD spaces $Y=[H,X]_\theta$ has played a role in a number of earlier works. Rubio de Francia himself indicated how the boundedness of linear operators with a decomposition
\begin{equation}\label{eq:TsumTj}
  T=\sum_{j\in\Z}T_j,\qquad \Norm{T_j}{\bddlin(L^2(\R;H))}\leq C2^{-\varepsilon\abs{j}},\qquad
  \Norm{T_j}{\bddlin(L^q(\R;X))}\leq C.
\end{equation}
can be conveniently handled in such spaces \cite[p.~219--220]{RdF:LNM}: one only needs the decay estimate in a Hilbert space, and a much cruder uniform estimate in general UMD spaces to conclude the summable decay $\Norm{T_j}{\bddlin(L^p(\R;Y))}\leq C2^{-\varepsilon'\abs{j}}$ by interpolation. The same class reappeared in Berkson--Gillespie \cite{BG:05} and Hyt\"onen \cite{Hyt:LPS}, where stronger results were obtained for such spaces than for general UMD spaces. See \cite{BG:05,Hyt:LPS} for more information on these spaces.

Although treated in the same paper, Rubio de Francia's extension of Carleson's theorem was not based on this interpolation property but on the explicit lattice structure in a more fundamental way. In contrast, our present contribution can be vaguely thought of as an adaptation of Rubio de Francia's approach on the operators \eqref{eq:TsumTj} to the maximal partial sum operator $S^*$ of the Walsh--Fourier series. The decomposition of $S^*$ is furnished by the time-frequency analysis of Lacey--Thiele \cite{LT:MRL}, and the estimates forming the basis of interpolation have a more subtle structure than above.

In fact, our proof is built in such a way that we obtain the convergence of Walsh--Fourier series for all UMD spaces $X$ satisfying a new condition, which we call the \emph{tile-type}, and we verify this condition for all intermediate UMD spaces as described. The name tile-type refers, on the one hand, to its resemblance of some established Banach space properties like type and martingale-type, and on the other hand, to its connection to the time-frequency tiles in the phase plane, as in the work of Lacey--Thiele \cite{LT:MRL}. The tile-type inequality is applied exactly once in the proof; everything else works for general UMD spaces. In this way, we single out for further investigation a specific sufficient condition for the convergence of vector-valued Walsh--Fourier series in full generality. 

The setting of a UMD space requires, ultimately, the use of martingale differences.
These are actually readily apparent in the Walsh case. 
The main point of departure from the classical reasoning is the notion of tile-type, and its use in the Size Lemma.
The remaining lemmas are known, but the details are included.

The extension of the present results to the trigonometric Fourier series will be treated in a subsequent work.

\section{Main results and preliminaries}

We introduce the Rademacher functions
\begin{equation*}
  r_i(x):=\sign\sin(2\pi\cdot 2^i x)=\sum_{k\in\N}\big(1_{2^{-i}[k,k+\tfrac12)}(x)-1_{2^{-i}[k+\tfrac12,k+1)}(x)\big)
\end{equation*}
and the Walsh functions
\begin{equation*}
  w_n(x):=\prod_{i=0}^{\infty}r_i(x)^{n_i},\qquad\text{for}\qquad n=\sum_{i=0}^{\infty}n_i 2^i\in\N,\quad n_i\in\{0,1\},
\end{equation*}
as objects defined for all $x\in\R_+$. The restrictions $1_{[0,1)}w_n$ form an orthonormal basis of $L^2(0,1)$.

Our main result is the following:

\begin{theorem}\label{thm:main}
	Let $Y$ be an intermediate UMD space, $p\in(1,\infty)$, and $f\in L^p(0,1;Y)$. Then
\begin{equation*}
   S_N f(x):=\sum_{n=0}^{N-1} \pair{f}{w_n}w_n(x)\to f(x)
\end{equation*}
as $N\to\infty$ for a.e.~$x\in(0,1)$. In fact,  the  maximal partial sum operator $S^*$,
\begin{equation*}
  S^*f(x):=\sup_{N\in\N}\abs{S_N f(x)},
\end{equation*}
is bounded from $L^p(0,1;Y)$ to $L^p(0,1)$.
\end{theorem}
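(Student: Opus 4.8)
The plan is to run the Walsh-model time-frequency analysis of Lacey--Thiele \cite{LT:MRL}, but with the scalar square-function/orthogonality estimates replaced by their UMD-valued analogues, using the new tile-type hypothesis at exactly one point. First I would linearize: by a standard reduction it suffices to bound, uniformly in a measurable choice $N(x)\in\N$ and a measurable sign/phase, the operator $f\mapsto S_{N(x)}f(x)$; and by the usual Walsh phase-plane formalism this operator is a sum over a finite collection $\mathbf P$ of tiles $P=I_P\times\omega_P$ of wave-packet coefficients, i.e.\ $\sum_{P\in\mathbf P}\pair{f}{w_P}\,w_P(x)\,1_{\omega_P^{\text{lower}}}(\text{frequency }N(x))$ or the equivalent Carleson-operator form. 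One then organizes $\mathbf P$ into trees (collections with a common frequency core) and bounds the operator via the two fundamental quantities attached to a tree: its \emph{size} (an $L^2$-normalized maximal average of the wave-packet coefficients, which for a UMD-valued $f$ must be phrased through a martingale/square-function norm so that the relevant Walsh projections are $R$-bounded) and its \emph{energy/density} (counting how the trees pack in the phase plane). The scalar argument then proceeds by: (i) a Size Lemma, decomposing $\mathbf P$ into subcollections of geometrically decreasing size with control on the number of trees; (ii) a single-tree estimate; (iii) summation over trees and over the size-decomposition. Steps (ii) and (iii) are, as the authors indicate, essentially the classical arguments carried over verbatim to the Banach-valued setting, using that $Y$ is UMD to make sense of the Walsh martingale differences and to get the vector-valued square-function bounds; these I would cite/adapt as the "remaining lemmas."

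The crucial new input is in the Size Lemma. In the scalar case one exploits Bessel's inequality / the $L^2$ orthogonality of the wave packets $w_P$ over a family of tiles with disjoint phase-plane support; the quantitative statement is that a collection of trees of size $\geq\sigma$ can be chosen to have total "mass" at most $\sim\sigma^{-2}\Norm{f}{2}^2$. For $Y$-valued $f$ this clean Hilbert-space orthogonality is unavailable, and the substitute is precisely the \emph{tile-type} inequality: an estimate of the form
\begin{equation*}
  \BNorm{\Big(\sum_{P}\abs{I_P}\,\abs{\ave{f}_P^{\,w_P}}_Y^{q}\Big)^{1/q}}{L^q}\leq C\,\Norm{f}{L^q(0,1;Y)}
\end{equation*}
for tiles $P$ with pairwise disjoint phase-plane supports (the correct formulation being the one the paper will state), with constant $C$ depending only on $Y$ and $q$. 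Once this is in hand, the size selection algorithm goes through with $L^2$ replaced by $L^q$ (or by the interpolated exponent), producing the same geometric control on the number and mass of the selected trees. The heart of the matter is therefore to \emph{verify tile-type for intermediate UMD spaces} $Y=[X,H]_\theta$: here one proves the $L^2$-type orthogonality estimate for the Hilbert endpoint $H$ by genuine orthogonality, a crude uniform estimate for the UMD endpoint $X$ (this is where UMD and the Walsh martingale structure enter, giving boundedness but no decay), and then interpolates the two bilinear/multilinear estimates — exactly the mechanism Rubio de Francia used for operators of the form \eqref{eq:TsumTj}. This interpolation must be done at the level of the phase-plane-indexed family so that the resulting constant is still uniform over all collections of pairwise-disjoint tiles.

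The main obstacle I anticipate is \emph{formulating tile-type correctly} and proving it survives complex interpolation with a uniform constant: one needs the two endpoint estimates to be set up as a single analytic family of operators (indexed by the tile collection, or realized through a clever vector-valued/retraction argument) so that Stein interpolation applies and the output constant does not blow up as the tile collection grows. A secondary technical point is making the scalar Lacey--Thiele bookkeeping (trees, counting functions, the exceptional set where the maximal averages are large) robust under replacing $L^2$ estimates by $L^q$ estimates with $q\neq2$, since the classical proof uses $q=2$ in a few places for convenience; these should all go through with the natural modifications but require care. Everything else — the linearization, the reduction to tiles, the single-tree bound, the final summation, and the passage from the maximal-operator bound to a.e.\ convergence via density of Walsh polynomials — is standard in the UMD-valued framework and I would present it with details but without surprises.
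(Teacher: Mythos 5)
Your proposal matches the paper's strategy essentially exactly: linearize to a sum over bitiles, organize into trees, run a density/size stopping-time argument with the tile-type inequality invoked exactly once in the Size Lemma, and establish tile-type for $Y=[X,H]_\theta$ by interpolating a Hilbert-space orthogonality endpoint against a UMD martingale-transform endpoint, exactly in the spirit of Rubio de Francia's treatment of \eqref{eq:TsumTj}. The one obstacle you flag as most worrying does not in fact arise: for a fixed family $\mathscr{T}$ of up-trees the operator $\mathscr{W}_{\mathscr{T}}'f=\{\sum_{P\in\mathbf{T}}\pair{f}{w_{P_d}}h_{I_P}\}_{\mathbf{T}\in\mathscr{T}}$ is linear with $\mathscr{T}$-uniform endpoint bounds $L^2(\R_+;H)\to\ell^2(\mathscr{T};L^2(\R_+;H))$ and $L^\infty(\R_+;X)\to\ell^\infty(\mathscr{T};\BMO(\R_+;X))$, so ordinary complex interpolation of linear operators already yields a $\mathscr{T}$-uniform intermediate bound with no Stein analytic-family machinery needed; the part you slightly underplay is the passage from the $L^q$ restricted weak-type estimate (where $q$ is the tile-type) to all $p\in(1,\infty)$, which requires a genuine major-subset argument with separate treatment of the cases $\abs{E}\le\abs{F}$ and $\abs{E}>\abs{F}$ and an auxiliary John--Nirenberg/martingale square-function lemma to control the size on tiles whose time interval avoids the exceptional set.
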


Making $N$ a function $N(x)$, we arrive at the linearization $S_{N(x)}f(x)$, and the above theorem is equivalent to the uniform bound
\begin{equation*}
  \Norm{S_{N(\cdot)}f}{L^p(0,1;Y)}\leq C\Norm{f}{L^p(0,1;Y)}
\end{equation*}
for all $f$ and $N$. To express $S_{N(x)}f(x)$ in a more flexible form, we recall more notation.

A \emph{tile} is a dyadic rectangle $P\subset\R_+\times\R_+$ of area $1$, i.e.,
\begin{equation*}
  P=I\times\omega=I\times\frac{1}{\abs{I}}[n,n+1),\qquad I\in\mathscr{D},\quad n\in\N,
\end{equation*}
where $\mathscr{D}$ is the collection of dyadic intervals of $\R_+$.
To every tile $P$, we associate the wave packet
\begin{equation*}
  w_P(x):=\frac{1}{\abs{I}^{1/2}}w_P^\infty(x),\qquad w_P^\infty(x)=1_I(x)w_n\Big(\frac{x}{\abs{I}}\Big).
\end{equation*}
The superscript $\infty$ refers to $L^\infty$ normalization. The Haar functions arise as special cases:
\begin{equation*}
  h_I(x)=\frac{1}{\abs{I}^{1/2}}1_I(x)r_0\Big(\frac{x}{\abs{I}}\Big)=w_{I\times\abs{I}^{-1}[0,1)}(x).
\end{equation*}

A \emph{bitile} is a dyadic rectangle of area $2$, i.e.,
\begin{equation*}
\begin{split}
  P &=I\times\frac{1}{\abs{I}}[2n,2(n+1)) \\
  &=I\times\frac{1}{\abs{I}}[2n,2n+1)\cup I\times\frac{1}{\abs{I}}[2n+1,2(n+1))
  =:P_d\cup P_u,
\end{split}
\end{equation*}
where the second line gives the canonical decomposition of $P$ to its \emph{down-tile} and \emph{up-tile}. If $P=I\times\omega$ is either a tile or a bitile, we write $I_P:=I$ and $\omega_P:=\omega$ for its time and frequency interval, respectively.

The following identity is explained in Thiele \cite[p.~68--69]{Thiele:book}:
\begin{equation*}
  S_{N(x)}f(x)=\sum_{\substack{P\text{ bitile}\\ I_P\subseteq[0,1)}}\pair{f}{w_{P_d}}w_{P_d}(x) 1_{\omega_{P_u}}(N(x)).
\end{equation*}
As in \cite{Thiele:book}, we will drop the restriction that $I_P\subseteq[0,1)$ in the subsequent analysis, and consider the resulting scale-invariant operator on $L^p(\R_+;Y)$ rather than $L^p(0,1;Y)$. We will first establish the following inequality 
on the bilinear form 
\begin{equation*}
	\bigl\lvert  \langle S_N f, g1 _E  \rangle\bigr\rvert \lesssim \lVert f\rVert_{L^q (\mathbb R _+; Y)} \Norm{g}{L^{\infty}(\mathbb{R}_+;Y^*)} \lvert  E\rvert ^{1/q} \,, 
\end{equation*}
where $ q$ is the tile-type of the UMD space $Y$.  This proof is then refined to prove the full range of estimates 
for the Carleson operator.

A partial order (among either tiles or bitiles) is defined by
\begin{equation*}
\begin{split}
  P\leq P'\ &\overset{\operatorname{def}}{\Leftrightarrow}\ I_P\subseteq I_{P'}\ \text{and}\ \omega_{P}\supseteq\omega_{P'} \\
  &\Leftrightarrow\ P_d\leq P_d'\ \text{or}\ 
     P_u\leq P_u'.
\end{split}
\end{equation*}
For bitiles, we also define
\begin{equation*}
   P\leq_j P'\ \overset{\operatorname{def}}{\Leftrightarrow}\ P_j\leq P_j',\qquad j\in\{d,u\}.
\end{equation*}

A tree $\mathbf{T}$ is a collection of bitiles $P$ for which there exists a top bitile $T$ (not necessarily an element of $\mathbf{T}$) such that
\begin{equation*}
  P\leq T\qquad\forall P\in\mathbf{T}.
\end{equation*}
Down-trees and up-trees are defined similarly by replacing $\leq$ by $\leq_d$ or $\leq_u$.

\begin{lemma}\label{lem:treeIdentity}
Let $\mathbf{T}$ be an up-tree with top $T$. Then for all $P\in\mathbf{T}$, we have
\begin{equation*}
  w_{P_d}(x) = 
  \epsilon_{PT}\cdot  w_{T_u}^\infty(x)\cdot   h_{I_P}(x)
\end{equation*}
for some constant factor $\epsilon_{PT}\in\{-1,+1\}$. Hence in particular
\begin{equation*}
  \pair{f}{w_{P_d}}w_{P_d} = 
\pair{f\cdot w_{T_u}^\infty}{h_{I_P}}h_{I_P}\cdot w_{T_u}^\infty.
\end{equation*}
\end{lemma}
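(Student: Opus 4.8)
The plan is to reduce everything to an explicit computation with Walsh functions and the wave packet definition. Recall that for a bitile $P=I\times\frac{1}{|I|}[2n,2(n+1))$, the down-tile is $P_d=I\times\frac{1}{|I|}[2n,2n+1)$, so $w_{P_d}(x)=|I|^{-1/2}1_I(x)w_{2n}(x/|I|)$; similarly the up-tile of the top $T$, say with time interval $I_T$ and $\omega_{T_u}=\frac{1}{|I_T|}[2m+1,2m+2)$, gives $w_{T_u}^\infty(x)=1_{I_T}(x)w_{2m+1}(x/|I_T|)$. The Haar function is $h_{I_P}(x)=|I_P|^{-1/2}1_{I_P}(x)r_0(x/|I_P|)=|I_P|^{-1/2}1_{I_P}(x)w_1(x/|I_P|)$. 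So the identity to be proven is, up to sign and after multiplying through by $|I_P|^{1/2}$,
\begin{equation*}
  1_{I_P}(x)\,w_{2n}\Big(\frac{x}{|I_P|}\Big)
  = \epsilon_{PT}\,1_{I_T}(x)\,w_{2m+1}\Big(\frac{x}{|I_T|}\Big)\,1_{I_P}(x)\,w_{1}\Big(\frac{x}{|I_P|}\Big).
\end{equation*}

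First I would record the key multiplicative property of Walsh functions: since $w_k=\prod_i r_i^{k_i}$ and $r_i(x)^2=1$, one has $w_k\,w_\ell=w_{k\oplus\ell}$ where $\oplus$ is bitwise addition mod $2$ (``exclusive or'' on the binary digits). In particular, when the binary expansions of $k$ and $\ell$ have disjoint supports, $w_k w_\ell = w_{k+\ell}$. I would also record the dyadic dilation/restriction behaviour: on a dyadic interval $J$ of length $2^{-s}$, writing $n=2^s n' + (\text{lower bits})$, the function $w_n(x)$ restricted to $J$ factors as (a Walsh function depending only on the high bits $n'$, which is constant on each dyadic child of $J$ at the appropriate scale) times (a Walsh function in $x/|J|$ depending on the low bits). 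This is exactly the mechanism that lets one peel off the ``$w_{T_u}^\infty$'' factor, which is constant at scale $|I_P|$, from the ``oscillating'' Haar factor $h_{I_P}$.

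Concretely, the key steps are: (1) Fix $P\in\mathbf{T}$. Since $\mathbf{T}$ is an up-tree with top $T$, we have $P\le_u T$, i.e. $I_P\subseteq I_T$ and $\omega_{P_u}\supseteq\omega_{T_u}$; let $|I_P|=2^{-a}$, $|I_T|=2^{-b}$ with $b\le a$, and note $1_{I_P}1_{I_T}=1_{I_P}$. (2) Translate the nesting $\omega_{P_u}\supseteq\omega_{T_u}$ into the binary relation between the frequency labels: if $\omega_{P_u}=\frac{1}{|I_P|}[2n,2n+1)$ has ``label'' $2n$ and $\omega_{T_u}$ has label $2m+1$, the containment of frequency intervals forces the high-order binary digits of $2m+1$ (those above the bit corresponding to the scale ratio $|I_P|/|I_T|$) to coincide with the digits of $2n$, while the remaining low-order digits of $2m+1$ are free; in fact the bottom bit must be $1$, matching the bit that $w_1(x/|I_P|)$ contributes. (3) Apply the dilation–restriction factorization of $w_{2m+1}(x/|I_T|)$ on the subinterval $I_P$: it splits as a factor constant on $I_P$ — but we must be slightly more careful, as actually the relevant statement is that $1_{I_P}(x)w_{2m+1}(x/|I_T|) = \pm 1_{I_P}(x) w_{2n}(x/|I_P|) w_1(x/|I_P|)$, the sign $\epsilon_{PT}$ accounting for the values of the ``frozen'' high bits of the dilated Walsh function at the left endpoint of $I_P$. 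Combining (1)–(3) gives the first displayed identity; multiplying both sides by $1/|I_P|^{1/2}$ recovers $w_{P_d}=\epsilon_{PT}w_{T_u}^\infty h_{I_P}$. The ``Hence'' clause is then immediate: $\pair{f}{w_{P_d}}w_{P_d}=\epsilon_{PT}^2\pair{f w_{T_u}^\infty}{h_{I_P}}h_{I_P}w_{T_u}^\infty$ since $(w_{T_u}^\infty)$ is $\{-1,+1\}$-valued on its support, so it may be moved freely inside and outside the pairing, and $\epsilon_{PT}^2=1$.

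The main obstacle is the bookkeeping in step (2)–(3): correctly matching the combinatorics of the partial order on tiles ($I_P\subseteq I_T$, $\omega_{P_u}\supseteq\omega_{T_u}$) with the binary-digit description of Walsh functions, and pinning down the sign $\epsilon_{PT}$. A clean way to organize this, and the route I would actually take, is to use the known factorization of wave packets on a tile into a ``low-frequency'' part constant on $I_P$ times a Haar-type ``high-frequency'' part — this is precisely Thiele's lemma relating $w_P$ for a tile to the Haar function $h_{I_P}$ times a Walsh function associated to the top — and then specialize to the down-tiles of an up-tree, where the up-frequency nesting guarantees that the low-frequency parts of all $P_d$, $P\in\mathbf{T}$, are restrictions of the single function $w_{T_u}^\infty$. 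Everything else is a routine verification using $r_i^2=1$ and the self-similarity of Walsh functions under dyadic dilation.
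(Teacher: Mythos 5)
Your proposal is correct and follows essentially the same route as the paper: translate the nesting $\omega_{P_u}\supseteq\omega_{T_u}$ into a statement about the binary digits of the frequency labels, use the dilation identity $w_n(2^k y)=w_{2^k n}(y)$ to rewrite $w_{n_P-1}(\cdot/|I_P|)$ as a Walsh function at scale $|I_T|$, peel off the Haar factor using $r_i^2\equiv 1$, and observe that the leftover low-frequency product $\prod_{i<k}r_i(\cdot/|I_T|)^{n_i}$ is constant on $I_P$, giving $\epsilon_{PT}$. The only cosmetic difference is that you verify the identity after multiplying through by $w_1(\cdot/|I_P|)$, while the paper expands $w_{P_d}$ directly; the underlying digit bookkeeping (including the necessity of the bit $n_k=1$) is the same.
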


\begin{proof}
We have $T_u=I_T\times\abs{I_T}^{-1}[n_T,n_T+1)$, with odd $n_T$. Consider an element $P\in\mathbf{T}$ with $P_u=I_P\times\abs{I_P}^{-1}[n_P,n_P+1)$, again with odd $n_P$, and let $2^{-k}:=\abs{I_P}/\abs{I_T}$. Then $P_u\leq T_u$ says that
\begin{equation*}
  I_P\subseteq I_T\qquad\text{and}\qquad
  2^{-k}(n_T+1)-1\leq n_P\leq 2^{-k}n_T.
\end{equation*}
If $n_T=\sum_{i=0}^{\infty}2^i n_i$, then the unique integer value of $n_P$ in the given range is
\begin{equation*}
  n_P=\sum_{i=k}^{\infty} 2^{i-k}n_i,
\end{equation*}
which is odd if and only if $n_k=1$. For those values of $k$, we have $P_d=I_P\times\abs{I_P}^{-1}[n_P-1,n_P)$, where
\begin{equation*}
  n_P-1=\sum_{i=k+1}^{\infty} 2^{i-k}n_i.
\end{equation*}
Hence
\begin{equation*}
\begin{split}
  w_{P_d}
  &=\frac{1_{I_P}}{\abs{I_P}^{1/2}}w_{n_P-1}\Big(\frac{\cdot}{\abs{I_P}}\Big)
  =\frac{1_{I_P}}{\abs{I_P}^{1/2}}w_{2^k (n_P-1)}\Big(\frac{\cdot}{\abs{I_T}}\Big) \\
  &=\frac{1_{I_P}}{\abs{I_P}^{1/2}}\prod_{i=k+1}^{\infty}r_i\Big(\frac{\cdot}{\abs{I_T}}\Big)^{n_i} \\
  &\overset{(*)}{=}\frac{1_{I_P}}{\abs{I_P}^{1/2}}\prod_{i=0}^{\infty}r_i\Big(\frac{\cdot}{\abs{I_T}}\Big)^{n_i}\times r_k\Big(\frac{\cdot}{\abs{I_T}}\Big)\times
        \prod_{i=0}^{k-1}r_i\Big(\frac{\cdot}{\abs{I_T}}\Big)^{n_i} \\  
        &=1_{I_T}w_{n_T}\Big(\frac{\cdot}{\abs{I_T}}\Big)\times\frac{1_{I_P}}{\abs{I_P}^{1/2}} r_{0}\Big(\frac{\cdot}{\abs{I_P}}\Big)\times
        \prod_{i=0}^{k-1}r_i\Big(\frac{\cdot}{2^k\abs{I_P}}\Big)^{n_i} \\
  &=w_{T_u}^\infty\times h_{I_P}\times
        \prod_{i=0}^{k-1}r_i\Big(\frac{\cdot}{2^k\abs{I_P}}\Big)^{n_i} .    
\end{split}
\end{equation*}
Note that $n_k=1$ was used in $(*)$, together with $r_i^2\equiv 1$. Notice that the last product takes a constant value on $I_P$, as $r_i$ is constant over dyadic intervals of length $2^{-i-1}$; this is our $\epsilon_{PT}$. The second claim follows from $\epsilon_{PT}^2=1$.
\end{proof}

\section{The tile-type of a Banach space}

Let $\mathscr{T}$ be a collection of up-trees such that: For any two distinct pairs $(P^i,\mathbf{T}^i)$ with $P^i\in\mathbf{T}^i\in\mathscr{T}$, we have $P^1_d\cap P^2_d=\varnothing$. We say that a Banach space $X$ has \emph{tile-type} $q$ if the following estimate holds uniformly for all such $\mathscr{T}$ and all $f\in L^q(\R_+;X)$:
\begin{equation*}
  \Big(\sum_{\mathbf{T}\in\mathscr{T}}\BNorm{\sum_{P\in\mathbf{T}}\pair{f}{w_{P_d}}w_{P_d}}{L^q(\R_+;X)}^q\Big)^{1/q}
  \lesssim\Norm{f}{L^q(\R_+;X)}.
\end{equation*}

Our results about this concept are summarized in the following proposition. It shows in particular that tile-type behaves somewhat like the classical cotype.

\begin{proposition}  \label{p.type}
A necessary condition for tile-type $q$ is that $X$ is a UMD space and $q\geq 2$. If a UMD space has tile-type $q$, it has tile-type $p$ for all $p\in[q,\infty)$. Every Hilbert space has tile-type $2$, and every complex interpolation space $[X,H]_\theta$, $\theta\in(0,1)$, between a UMD space and a Hilbert space has tile-type $2/\theta$.
\end{proposition}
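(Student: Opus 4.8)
The plan is to prove the four assertions more or less separately; the only place that requires genuinely new input is the Hilbert-space case, while the rest is organized around the tree identity of Lemma~\ref{lem:treeIdentity} together with standard facts about UMD spaces and complex interpolation.

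\emph{Necessity of UMD and of $q\ge2$.} To see that tile-type $q$ forces $X$ to be UMD I would apply the hypothesis to a single up-tree $\mathbf{T}$; its down-tiles are automatically pairwise disjoint, so $\mathscr{T}=\{\mathbf{T}\}$ is admissible. By the second identity of Lemma~\ref{lem:treeIdentity}, $\sum_{P\in\mathbf{T}}\pair{f}{w_{P_d}}w_{P_d}=w_{T_u}^\infty\cdot\Pi_\mathbf{T}(f\,w_{T_u}^\infty)$, where $\Pi_\mathbf{T}=\sum_{P\in\mathbf{T}}\Delta_{I_P}$, $\Delta_I g:=\pair{g}{h_I}h_I$; since $\abs{w_{T_u}^\infty}=1_{I_T}$ and $(w_{T_u}^\infty)^2=1_{I_T}$, the tile-type $q$ estimate is equivalent to the uniform bound $\Norm{\Pi_\mathbf{T}g}{L^q(I_T;X)}\lesssim\Norm{g}{L^q(I_T;X)}$. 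The digit analysis carried out inside the proof of Lemma~\ref{lem:treeIdentity} shows that, by prescribing the binary digits of the frequency datum of the top $T$, the collection $\{I_P:P\in\mathbf{T}\}$ can be made to run over any finite family of dyadic subintervals of $I_T$; hence every Haar projection is uniformly bounded on $L^q((0,1);X)$, one of the standard characterisations of UMD. For $q\ge2$ it is enough to test $X=\C$: take $N$ single-bitile trees whose down-tiles realise $N$ distinct Walsh functions on $[0,1)$ (the down-tile frequency intervals are then distinct, hence disjoint) and apply the inequality to $f=\sum_{j=1}^N\eps_j w_{m_j}$; the left side equals $N^{1/q}$, while $\Norm{f}{L^q}\asymp\sqrt N$ by Khintchine's inequality for a suitable choice of signs, forcing $q\ge2$.

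\emph{Hilbert spaces.} The crucial point is that any two wave packets $w_{P_d},w_{P'_d}$ occurring in an admissible $\mathscr{T}$ are orthogonal in $L^2(\R_+)$: the hypothesis makes $\omega_{P_d}$ and $\omega_{P'_d}$ disjoint, so $P_d$ and $P'_d$ are disjoint rectangles, and for Walsh wave packets this is exactly orthogonality — when $I_{P'}\subsetneq I_P$ one checks, as in the proof of Lemma~\ref{lem:treeIdentity}, that $w_{P_d}\restriction I_{P'}$ is a multiple of a single Walsh function on $I_{P'}$ whose frequency interval would have to meet $\omega_{P'_d}$ if the pairing were nonzero. Since also $\Norm{w_{P_d}}{L^2(\R_+)}=1$, for $H$ a Hilbert space the Pythagorean theorem inside each tree and Bessel's inequality give
\begin{equation*}
  \sum_{\mathbf{T}\in\mathscr{T}}\BNorm{\sum_{P\in\mathbf{T}}\pair{f}{w_{P_d}}w_{P_d}}{L^2(\R_+;H)}^2
  =\sum_{(P,\mathbf{T})}\Norm{\pair{f}{w_{P_d}}}{H}^2
  \le\Norm{f}{L^2(\R_+;H)}^2,
\end{equation*}
so $H$ has tile-type $2$ with constant $1$.

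\emph{The monotonicity and the interpolation statement.} For fixed admissible $\mathscr{T}$ put $T_\mathscr{T}f:=\big(\sum_{P\in\mathbf{T}}\pair{f}{w_{P_d}}w_{P_d}\big)_{\mathbf{T}\in\mathscr{T}}$; then ``$Y$ has tile-type $s$'' means exactly that $T_\mathscr{T}\colon L^s(\R_+;Y)\to\ell^s(\mathscr{T};L^s(\R_+;Y))=L^s(\R_+;\ell^s(\mathscr{T};Y))$ uniformly in $\mathscr{T}$. By Lemma~\ref{lem:treeIdentity} each tree-summand is the single-tree operator $f\mapsto w_{T_u}^\infty\,\Pi_\mathbf{T}(f\,w_{T_u}^\infty)$, i.e.\ a dyadic martingale transform conjugated by a Walsh function, and for a single tree the intervals $I_P$ containing a given pair of points are nested, so the kernel sum telescopes and one gets the Calderón--Zygmund size and Hörmander conditions with absolute constants; since $X$ is UMD this yields, uniformly in $\mathbf{T}$, boundedness on $L^s(\R_+;X)$ for $1<s<\infty$ and from $L^\infty(\R_+;X)$ to $\BMO(\R_+;X)$ (dyadic $\BMO$). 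Aggregating trivially into $\ell^\infty$ gives the cheap bound $T_\mathscr{T}\colon L^\infty(\R_+;X)\to\ell^\infty(\mathscr{T};\BMO(\R_+;X))$, uniform in $\mathscr{T}$. Complex interpolation of this with the sharp Hilbert bound $T_\mathscr{T}\colon L^2(\R_+;H)\to\ell^2(\mathscr{T};L^2(\R_+;H))$ at parameter $\theta$, using Calderón's formulas for $L^p(\,\cdot\,)$, $\ell^p(\,\cdot\,)$ and $\BMO(\,\cdot\,)$ with changing target spaces, turns the domain into $[L^\infty(\R_+;X),L^2(\R_+;H)]_\theta=L^{2/\theta}(\R_+;[X,H]_\theta)$ and the range into $\ell^{2/\theta}(\mathscr{T};L^{2/\theta}(\R_+;[X,H]_\theta))$, since $\theta/2=(1-\theta)/\infty+\theta/2$; this is precisely tile-type $2/\theta$ for $Y=[X,H]_\theta$. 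The monotonicity assertion is proved by the same scheme, interpolating the hypothesised bound $T_\mathscr{T}\colon L^q(\R_+;X)\to\ell^q(\mathscr{T};L^q(\R_+;X))$ against the cheap $L^\infty\!\to\!\BMO$ bound at a parameter $\vartheta\in(0,1)$; the resulting exponent $p$ satisfies $1/p=(1-\vartheta)/q$ and runs over all of $[q,\infty)$.

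\emph{Main obstacle.} The one step that is not purely formal is the uniform cheap endpoint: that a dyadic martingale transform conjugated by an \emph{arbitrary} Walsh function maps $L^\infty(X)\to\BMO(X)$ (or dually $H^1\to L^1$) with a constant depending only on the UMD constant of $X$ and not on the tree. The safest route is to verify directly that the single-tree operator is a vector-valued Calderón--Zygmund operator — the conjugating Walsh function being harmless once one works with the honest kernel, whose telescoping over nested $I_P$'s was indicated above — and then invoke the vector-valued Calderón--Zygmund theorem. One should also make sure the identity $[\BMO_{\mathscr D}(\R_+;X),L^2(\R_+;H)]_\theta=L^{2/\theta}(\R_+;[X,H]_\theta)$ is applied in a legitimate form (the dyadic $\BMO$ and the presence of a Hilbert space at one endpoint make this the clean case of vector-valued Fefferman--Stein interpolation).
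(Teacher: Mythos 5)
Your treatments of the Hilbert-space case (orthogonality of the $w_{P_d}$ plus Bessel), the overall interpolation scheme, and the necessity of UMD and $q\geq 2$ are reasonable and, for the last two, usefully fill in a part the paper states without proof. The problem is the endpoint estimate that drives the interpolation.

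You take the operator to be $T_\mathscr{T}f=\{\sum_{P\in\mathbf{T}}\pair{f}{w_{P_d}}w_{P_d}\}_\mathbf{T}$, identify each component as $f\mapsto w_{T_u}^\infty\,\Pi_\mathbf{T}(f\,w_{T_u}^\infty)$, and claim $L^\infty(\R_+;X)\to\BMO(\R_+;X)$ boundedness by verifying Calder\'on--Zygmund kernel conditions. This does not work uniformly in $\mathbf{T}$. The kernel is $K(x,y)=w_{T_u}^\infty(x)\,\Pi_\mathbf{T}(x,y)\,w_{T_u}^\infty(y)$; the \emph{un}-conjugated Haar kernel $\Pi_\mathbf{T}$ satisfies the dyadic H\"ormander condition trivially (for $x,x'$ in a common dyadic $J$ and $y\notin J$ one has $\Pi_\mathbf{T}(x,y)=\Pi_\mathbf{T}(x',y)$), but after conjugation $K(x,y)-K(x',y)=[w_{T_u}^\infty(x)-w_{T_u}^\infty(x')]\,w_{T_u}^\infty(y)\,\Pi_\mathbf{T}(x,y)$, and $w_{T_u}^\infty$ can oscillate on $J$ whenever $|J|>2^{-K-1}|I_T|$, with $K$ the top binary digit of the top frequency. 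For such $J$, $\int_{y\notin J}|\Pi_\mathbf{T}(x,y)|\,dy$ is comparable to the number of distinct scales $|I_P|>|J|$ occurring in $\mathbf{T}$: the telescoping you invoke only collapses the sum to a constant when the tree is full, while for a sparse multi-scale tree (say $n_T$ with bits at positions $0,2,4,\dots,2m$) this count is $\sim m$, hence unbounded. Nor is dyadic $\BMO$ invariant under multiplication by a unimodular Walsh function, so one cannot pass from a $\BMO$ bound for $\Pi_\mathbf{T}(f\,w_{T_u}^\infty)$ to one for $w_{T_u}^\infty\,\Pi_\mathbf{T}(f\,w_{T_u}^\infty)$ by fiat. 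The paper circumvents exactly this issue by introducing the companion operator $\mathscr{W}_\mathscr{T}'f=\{\sum_{P\in\mathbf{T}}\pair{f}{w_{P_d}}h_{I_P}\}_\mathbf{T}$, which by Lemma~\ref{lem:treeIdentity} is a genuine martingale transform of $f\,w_{T_u}^\infty$ \emph{without} the outer unimodular factor, hence maps $L^\infty(X)\to\BMO(X)$ directly by the UMD property; since $|w_{T_u}^\infty|=1_{I_T}$ the $L^p$ norms of $\mathscr{W}_\mathscr{T}f$ and $\mathscr{W}_\mathscr{T}'f$ coincide and so do the Hilbert $L^2$ endpoints, so one interpolates with $\mathscr{W}_\mathscr{T}'$ and transfers back at the end. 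You should restructure your interpolation argument accordingly; the Calder\'on--Zygmund route is a dead end here.
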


In particular, every $L^p$ space (even non-commutative) has tile-type $q$ for all $q\in(\max\{p,p'\},\infty)$.

We consider the following operators:
\begin{equation*}
  \mathscr{W}_{\mathscr{T}}f:=\Big\{\sum_{P\in\mathbf{T}}\pair{f}{w_{P_d}}w_{P_d}\Big\}_{\mathbf{T}\in\mathscr{T}},\qquad
  \mathscr{W}_{\mathscr{T}}'f:=\Big\{\sum_{P\in\mathbf{T}}\pair{f}{w_{P_d}}h_{I_P}\Big\}_{\mathbf{T}\in\mathscr{T}}.
\end{equation*}
We are concerned about the boundedness
\begin{equation*}
  \mathscr{W}_{\mathscr{T}}: L^p(\R_+;X)\to \ell^p(\mathscr{T};L^p(\R_+;X)).
\end{equation*}
From Lemma~\ref{lem:treeIdentity} it follows that
\begin{equation*}
  \Norm{\mathscr{W}_{\mathscr{T}}f}{ \ell^p(\mathscr{T};L^p(\R_+;X))}
  =\Norm{\mathscr{W}_{\mathscr{T}}'f}{ \ell^p(\mathscr{T};L^p(\R_+;X))},
\end{equation*}
so the question is equivalent for $\mathscr{W}_{\mathscr{T}}$ and $\mathscr{W}_{\mathscr{T}}'$. However, the latter operator will be more amenable for the end-point mapping property
\begin{equation*}
  \mathscr{W}_{\mathscr{T}}': L^\infty(\R_+;X)\to \ell^\infty(\mathscr{T};\BMO(\R_+;X)),
\end{equation*}
which will play a role in interpolation. Note that $\BMO$ stands for the dyadic BMO, since this is the only BMO space we need here.

\begin{lemma}\label{lem:WonH}
If $H$ is a Hilbert space, then
\begin{equation*}
  \Norm{\mathscr{W}_{\mathscr{T}}'f}{\ell^2(\mathscr{T};L^2(\R_+;H))}=\Norm{\mathscr{W}_{\mathscr{T}}f}{\ell^2(\mathscr{T};L^2(\R_+;H))}
  \leq \Norm{f}{L^2(\R_+;H)}.
\end{equation*}
\end{lemma}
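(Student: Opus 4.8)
The plan is to read the estimate off the orthonormality, in $L^2(\R_+)$, of the wave packets attached to the down-tiles; Parseval's and Bessel's inequalities then do all the work. The first equality needs nothing new: it is the $p=2$, $X=H$ instance of the identity $\Norm{\mathscr{W}_{\mathscr{T}}f}{\ell^p(\mathscr{T};L^p(\R_+;X))}=\Norm{\mathscr{W}_{\mathscr{T}}'f}{\ell^p(\mathscr{T};L^p(\R_+;X))}$ recorded just above, which holds pointwise by Lemma~\ref{lem:treeIdentity} together with $\abs{w_{T_u}^\infty}=1_{I_T}$. So it suffices to bound $\Norm{\mathscr{W}_{\mathscr{T}}f}{\ell^2(\mathscr{T};L^2(\R_+;H))}$.

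The first step is to verify that the family $\{w_{P_d}:P\in\mathbf{T},\ \mathbf{T}\in\mathscr{T}\}$ is orthonormal in $L^2(\R_+)$. Each $w_{P_d}$ has unit $L^2$-norm by construction. Given two tiles from distinct pairs $(P^1,\mathbf{T}^1)\neq(P^2,\mathbf{T}^2)$ -- a case which includes two distinct bitiles of one and the same tree -- the defining property of $\mathscr{T}$ yields $P^1_d\cap P^2_d=\varnothing$ as rectangles, and disjoint tiles always carry orthogonal wave packets (a standard phase-plane fact, cf.\ \cite{Thiele:book}): if the time intervals are disjoint this is immediate, and otherwise one restricts to the smaller time interval, on which both wave packets reduce to $\pm$ a Walsh function of the same scale but distinct (and then disjoint) frequency index, so that the integral vanishes by orthogonality of the Walsh system. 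Within a single up-tree with top $T$ the same conclusion is visible directly from Lemma~\ref{lem:treeIdentity}: there $w_{P_d}=\epsilon_{PT}w_{T_u}^\infty h_{I_P}$ with $\epsilon_{PT}\in\{\pm1\}$, and since $(w_{T_u}^\infty)^2=1_{I_T}$ while $I_P,I_{P'}\subseteq I_T$, one gets $\pair{w_{P_d}}{w_{P'_d}}=\epsilon_{PT}\epsilon_{P'T}\pair{h_{I_P}}{h_{I_{P'}}}$, which vanishes unless $I_P=I_{P'}$, i.e.\ unless $P=P'$ (two bitiles of an up-tree with the same time interval have up-tiles of equal length both containing $\omega_{T_u}$, hence equal up-tiles, hence coincide), by orthonormality of the dyadic Haar system.

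With the orthonormality in hand, fix $f\in L^2(\R_+;H)$. For each fixed $\mathbf{T}$ the sum $\sum_{P\in\mathbf{T}}\pair{f}{w_{P_d}}w_{P_d}$ is the orthogonal projection of $f$ onto the closed span of $\{w_{P_d}:P\in\mathbf{T}\}$, so Parseval (applied coordinatewise in an orthonormal basis of $H$) gives
\begin{equation*}
  \BNorm{\sum_{P\in\mathbf{T}}\pair{f}{w_{P_d}}w_{P_d}}{L^2(\R_+;H)}^2=\sum_{P\in\mathbf{T}}\Norm{\pair{f}{w_{P_d}}}{H}^2 .
\end{equation*}
Summing over $\mathbf{T}\in\mathscr{T}$ and applying Bessel's inequality to the full orthonormal family $\{w_{P_d}\}$ (again coordinatewise in $H$),
\begin{equation*}
  \Norm{\mathscr{W}_{\mathscr{T}}f}{\ell^2(\mathscr{T};L^2(\R_+;H))}^2=\sum_{\mathbf{T}\in\mathscr{T}}\sum_{P\in\mathbf{T}}\Norm{\pair{f}{w_{P_d}}}{H}^2\leq\Norm{f}{L^2(\R_+;H)}^2 ,
\end{equation*}
which is the assertion. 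There is no serious obstacle: the only genuine input is the classical orthonormality of wave packets on pairwise disjoint tiles, and the only point needing a little care is that this disjointness is needed -- and is available, by the hypothesis on $\mathscr{T}$ (or transparently via Lemma~\ref{lem:treeIdentity}) -- within each tree and not merely across different trees.
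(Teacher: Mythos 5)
Your proof coincides with the paper's: both reduce the bound to the orthonormality of the family $\{w_{P_d}\}$, which the hypothesis on $\mathscr{T}$ makes a family of wave packets on pairwise disjoint tiles, and then apply Parseval within each tree and Bessel over the whole family (coordinatewise in $H$). The paper asserts the pairwise orthogonality without comment; your verification of it, via the general disjoint-tiles fact and alternatively via Lemma~\ref{lem:treeIdentity} within a single tree, is a correct elaboration of the same step.
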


\begin{proof}
This follows from the fact that all appearing $w_{P_d}$ are pairwise orthogonal, and hence
\begin{equation*}
\begin{split}
  \Big(\sum_{\mathbf{T}\in\mathscr{T}}\BNorm{\sum_{P\in\mathbf{T}}\pair{f}{w_{P_d}}w_{P_d}}{L^2(\R_+;H)}^2\Big)^{1/2}
  &=\Big(\sum_{\mathbf{T}\in\mathscr{T}}\sum_{P\in\mathbf{T}}\norm{\pair{f}{w_{P_d}}}{H}^2\Big)^{1/2} \\
  &\leq\Norm{f}{L^2(\R_+;H)}.\qedhere
\end{split}
\end{equation*}
\end{proof}

\begin{lemma}\label{lem:WonBMO}
If $X$ is a UMD space, then
\begin{equation*}
  \Norm{\mathscr{W}_{\mathscr{T}}'f}{\ell^\infty(\mathscr{T};\BMO(\R_+;X))}
  \lesssim \Norm{f}{L^\infty(\R_+;X)}.
\end{equation*}
\end{lemma}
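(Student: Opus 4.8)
The plan is to fix a single up-tree $\mathbf{T}\in\mathscr{T}$ with top $T$ and bound $\Norm{\sum_{P\in\mathbf{T}}\pair{f}{w_{P_d}}h_{I_P}}{\BMO(\R_+;X)}$ uniformly in $\mathbf{T}$; the $\ell^\infty$ over $\mathscr{T}$ is then automatic. By Lemma~\ref{lem:treeIdentity} we have $\pair{f}{w_{P_d}}=\pair{f\cdot w_{T_u}^\infty}{h_{I_P}}$ (up to the sign $\epsilon_{PT}$ which, being $\pm1$, does not affect any norm), so setting $g:=f\cdot w_{T_u}^\infty$ we must estimate $\sum_{P\in\mathbf{T}}\pair{g}{h_{I_P}}h_{I_P}$ in dyadic BMO. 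Since $\abs{w_{T_u}^\infty}\le 1$ pointwise, $\Norm{g}{L^\infty}\le\Norm{f}{L^\infty}$, so it suffices to prove the \emph{scalar-free} statement: for any UMD space $X$ and any collection $\mathscr{I}$ of dyadic intervals, the martingale-transform-type operator $g\mapsto\sum_{I\in\mathscr{I}}\pair{g}{h_I}h_I$ maps $L^\infty(\R_+;X)\to\BMO(\R_+;X)$ boundedly. (Here the relevant $\mathscr{I}=\{I_P:P\in\mathbf{T}\}$, but no tree structure is needed for this step.)

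The key step is the $L^\infty\to\BMO$ bound for dyadic martingale transforms. First I would recall that the dyadic square function / UMD property gives $\Norm{\sum_{I}\pair{g}{h_I}h_I}{L^2(\R_+;X)}\lesssim\Norm{g}{L^2(\R_+;X)}$, and more generally $L^p$-boundedness for $p\in(1,\infty)$; this is the Burkholder martingale transform theorem, which is equivalent to UMD. To pass from $L^p$-boundedness to the $L^\infty\to\BMO$ estimate, fix a dyadic interval $J$ and decompose $g=g1_J+g1_{J^c}$. For the local part $g1_J$: the operator restricted to $I\subseteq J$ has output supported on $J$, so by $L^2$-boundedness $\frac1{\abs J}\int_J\abs{\sum_{I\subseteq J}\pair{g1_J}{h_I}h_I}^2\lesssim\frac1{\abs J}\Norm{g1_J}{L^2}^2\le\Norm{g}{L^\infty}^2$, and an oscillation estimate (subtract the average over $J$) follows a fortiori. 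For the far part $g1_{J^c}$: any $I\in\mathscr{I}$ with $I\cap J\neq\varnothing$ and $I\not\subseteq J$ satisfies $I\supsetneq J$, and on such $I$ the Haar function $h_I$ is \emph{constant} on $J$; hence $\bigl(\sum_{I\supsetneq J}\pair{g}{h_I}h_I\bigr)1_J$ equals a constant on $J$, which is killed by subtracting the mean. Intervals $I$ with $I\subseteq J$ acting on $g1_{J^c}$ contribute zero since $\pair{g1_{J^c}}{h_I}=0$ for such $I$. Therefore the BMO oscillation over $J$ comes entirely from the local part and is $\lesssim\Norm{g}{L^\infty(\R_+;X)}$. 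Taking the supremum over dyadic $J$ completes the argument.

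The main obstacle — really the only nontrivial input — is the use of UMD in the form of $L^2$-boundedness of arbitrary (here, just $\{0,1\}$-valued) dyadic martingale transforms; everything else is the standard $L^\infty\to\BMO$ localization argument, exploiting that Haar functions on intervals strictly containing $J$ are constant on $J$. One point of care is that in the vector-valued setting one should work with the genuine $L^2$-averaged oscillation definition of $\BMO(\R_+;X)$ (i.e.\ $\sup_J\bigl(\tfrac1{\abs J}\int_J\Norm{u-\ave{u}_J}{X}^2\bigr)^{1/2}$), but by the UMD-valued John--Nirenberg inequality any $L^r$-averaged variant with $r\in[1,\infty)$ gives an equivalent norm, so this is immaterial. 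Uniformity in $\mathscr{T}$ is clear because the final bound depends on $f$ only through $\Norm{f}{L^\infty(\R_+;X)}$ and on $X$ only through its UMD constant and $p$.
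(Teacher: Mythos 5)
Your proof follows the paper's approach exactly: fix a single up-tree, use Lemma~\ref{lem:treeIdentity} to rewrite the sum as a Haar (martingale) transform of $g=f\cdot w_{T_u}^\infty$, invoke the $L^\infty\to\BMO$ boundedness of martingale transforms in UMD spaces, and finish by noting $\Norm{f\cdot w_{T_u}^\infty}{\infty}=\Norm{f}{\infty}$. The only difference is that the paper cites the $L^\infty\to\BMO$ bound as ``well known'' while you unpack it with the standard localization argument (local $L^2$ part via Burkholder's theorem, far part constant on $J$); both are correct and essentially the same proof.
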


\begin{proof}
It suffices to consider a single up-tree $\mathbf{T}$. By Lemma~\ref{lem:treeIdentity},
\begin{equation*}
  \sum_{P\in\mathbf{T}}\pair{f}{w_{P_d}}h_{I_P}
  =\sum_{P\in\mathbf{T}}\epsilon_{PT}\pair{f\cdot w_{T_u}^\infty}{h_{I_P}}h_{I_P}
\end{equation*}
is a martingale transform of $f\cdot w_{T_u}$. It is well known that martingale transforms map $L^\infty(\R_+;X)$ to $\BMO(\R_+;X)$ when $X$ is a UMD space. Since $\Norm{f\cdot w_{T_u}^\infty}{\infty}=\Norm{f}{\infty}$, the result follows.
\end{proof}

\begin{remark}
A similar argument shows that
\begin{equation*}
  \Norm{\mathscr{W}_{\mathscr{T}}f}{\ell^\infty(\mathscr{T};L^p(\R_+;X))}=\Norm{\mathscr{W}_{\mathscr{T}}'f}{\ell^\infty(\mathscr{T};L^p(\R_+;X))}
  \lesssim \Norm{f}{L^p(\R_+;X)}
\end{equation*}
for any $p\in(1,\infty)$ and any UMD space $X$. However, we have no use for this result, where the exponents of $\ell^\infty$ and $L^p$ do not match.
\end{remark}

\begin{lemma}
If $Y=[X,H]_{\theta}$ is a complex interpolation space between a UMD space $X$ and a Hilbert space $H$, with parameter $\theta\in(0,1)$, then
\begin{equation*}
  \Norm{\mathscr{W}_{\mathscr{T}}f}{\ell^p(\mathscr{T};L^p(\R_+;Y))}=\Norm{\mathscr{W}_{\mathscr{T}}'f}{\ell^p(\mathscr{T};L^p(\R_+;Y))}
  \lesssim \Norm{f}{L^p(\R_+;Y)}
\end{equation*}
holds for all $p\in[2/\theta,\infty)$.
\end{lemma}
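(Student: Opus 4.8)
The plan is to obtain the claimed bound by complex interpolation between the two endpoint estimates already at our disposal: Lemma~\ref{lem:WonH}, which gives $\mathscr{W}_{\mathscr{T}}'\colon L^2(\R_+;H)\to\ell^2(\mathscr{T};L^2(\R_+;H))$ with norm $\leq 1$, and Lemma~\ref{lem:WonBMO}, which gives $\mathscr{W}_{\mathscr{T}}'\colon L^\infty(\R_+;X)\to\ell^\infty(\mathscr{T};\BMO(\R_+;X))$ with norm $\lesssim 1$. Crucially, the operator $\mathscr{W}_{\mathscr{T}}'$ (unlike $\mathscr{W}_{\mathscr{T}}$) is genuinely linear, so it is a legitimate candidate for an interpolation argument; the equality of norms from Lemma~\ref{lem:treeIdentity} then transfers the conclusion back to $\mathscr{W}_{\mathscr{T}}$. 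The target space is reached by noting that the complex interpolation scale $[L^\infty,\BMO]$-type endpoints together with the Hilbertian $L^2$ endpoint should produce, at parameter $\theta$, the space $L^p(\R_+;[X,H]_\theta) = L^p(\R_+;Y)$ on the domain side and $\ell^p(\mathscr{T};L^p(\R_+;Y))$ on the target side, precisely when $1/p = (1-\theta)/\infty + \theta/2 = \theta/2$, i.e. $p = 2/\theta$. The monotonicity in $p$ (valid for UMD $Y$, and $Y$ is UMD as an interpolation space between UMD spaces) then upgrades this single value $p=2/\theta$ to the whole range $p\in[2/\theta,\infty)$; alternatively one can interpolate against a third, $L^\infty$-type endpoint given by the Remark.

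The key steps, in order, are: (1) record that $Y=[X,H]_\theta$ is UMD, so all the machinery applies and the final monotonicity step is available; (2) identify the correct vector-valued interpolation identities, namely $[L^2(\R_+;H),L^\infty(\R_+;X)]_{1-\theta} = L^p(\R_+;Y)$ and, on the target side, $[\ell^2(\mathscr{T};L^2(\R_+;H)),\ell^\infty(\mathscr{T};\BMO(\R_+;X))]_{1-\theta} = \ell^p(\mathscr{T};L^p(\R_+;Y))$, both with $p=2/\theta$; (3) invoke the bilinear complex interpolation theorem (Calder\'on) to conclude $\mathscr{W}_{\mathscr{T}}'\colon L^p(\R_+;Y)\to\ell^p(\mathscr{T};L^p(\R_+;Y))$ with norm $\lesssim 1$, uniformly over admissible $\mathscr{T}$; (4) use the identity $\Norm{\mathscr{W}_{\mathscr{T}}f}{\ell^p(\mathscr{T};L^p(\R_+;Y))}=\Norm{\mathscr{W}_{\mathscr{T}}'f}{\ell^p(\mathscr{T};L^p(\R_+;Y))}$ to transfer; (5) extend from $p=2/\theta$ to $p\in[2/\theta,\infty)$ by the standard monotonicity of such square-function/tree estimates in UMD spaces — or, to keep things self-contained, interpolate once more with the uniform $L^\infty$ estimate of the Remark.

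The main obstacle is step (2): one must be careful about the complex interpolation of the \emph{target} scale, because $\BMO(\R_+;X)$ appears rather than $L^\infty(\R_+;X)$, and interpolation involving $\BMO$ requires the right formulation. Here the relevant facts are that $[H,\BMO(\R_+;X)]_\theta = L^p(\R_+;[H,X]_\theta)$ in the dyadic setting when $p=2/\theta$ — essentially the vector-valued, dyadic analogue of the Fefferman--Stein $[L^2,\BMO]_\theta=L^p$ result, which holds by the John--Nirenberg inequality and the atomic/duality description of dyadic $H^1$--$\BMO$ — and that complex interpolation commutes with the formation of $\ell^p$-valued spaces over a fixed index set (here $\mathscr{T}$), as well as with $L^p(\R_+;\cdot)$. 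Assembling these commutation properties correctly, and checking that the constants are genuinely uniform in $\mathscr{T}$ (which they are, since the endpoint constants in Lemmas~\ref{lem:WonH} and~\ref{lem:WonBMO} are), is the only real content; the rest is bookkeeping. An equivalent and perhaps cleaner route is to interpolate the two \emph{operator} norm bounds directly via the abstract complex interpolation functor applied to the couples $(L^2(\R_+;H),L^\infty(\R_+;X))$ and $(\ell^2(\mathscr{T};L^2(\R_+;H)),\ell^\infty(\mathscr{T};\BMO(\R_+;X)))$, and only at the very end identify the resulting intermediate spaces with the Bochner--Lebesgue spaces above.
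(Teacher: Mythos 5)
Your proposal follows essentially the same route as the paper: interpolate the Hilbert-space $L^2\to\ell^2(L^2)$ bound (Lemma~\ref{lem:WonH}) against the UMD $L^\infty\to\ell^\infty(\BMO)$ bound (Lemma~\ref{lem:WonBMO}) to get the $p=2/\theta$ case, then transfer to $\mathscr{W}_{\mathscr{T}}$ by the norm identity. One small caution on step~(5): the alternative you float --- interpolating once more with the Remark's $L^p\to\ell^\infty(L^p)$ bound --- does not close, precisely because the outer and inner exponents there do not match (the paper explicitly flags this as unusable). The ``monotonicity in $p$'' you invoke is not a free-standing fact; it is obtained exactly as the paper does it, by noting that $Y$ itself is UMD and interpolating the freshly proved $p=2/\theta$ estimate against Lemma~\ref{lem:WonBMO} applied with $X=Y$, i.e.\ against $L^\infty(\R_+;Y)\to\ell^\infty(\mathscr{T};\BMO(\R_+;Y))$.
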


\begin{proof}
Consider the operator $\mathscr{W}_{\mathscr{T}}'$, the result (but not the proof) for the other operator being equivalent. For $p=2/\theta$, we interpolate between the estimates of Lemmas~\ref{lem:WonH} and \ref{lem:WonBMO}, using the complex interpolation results
\begin{equation*}
  [L^\infty(\R_+;X),L^2(\R_+;H)]_\theta
  =L^p(\R_+;[X,H]_\theta)=L^p(\R_+;Y),
\end{equation*}
and
\begin{equation*}
\begin{split}
  [\ell^\infty(\mathscr{T}; &\BMO(\R_+;X)), \ell^2(\mathscr{T},L^2(\R_+;H))]_\theta \\
    &=\ell^p(\mathscr{T};[\BMO(\R_+;X),L^2(\R_+;H)]_\theta) \\
    &=\ell^p(\mathscr{T};L^p(\R_+;[X,H]_\theta))
    =\ell^p(\mathscr{T};L^p(\R_+;Y)).
\end{split}
\end{equation*}
For $p\in(2/\theta,\infty)$, we similarly interpolate between the result just established for $p=2/\theta$, and the result of Lemma~\ref{lem:WonBMO} specialized to $X=Y$.
\end{proof}

\section{The tree lemma}

We take $ E\subset \mathbb R _+$, and for a collection of tiles $ \mathbf P$, define two quantities below.
\begin{equation*}
	\operatorname{density}(\mathbf{P}):=\sup_{P\in\mathbf{P}}\sup_{P'\geq P}\frac{\abs{I_{P'}\cap E_{P'}}}{\abs{I_{P'}}},\qquad E_{P'}:=E\cap\{x \;:\; N (x)\in\omega_{P'}\}.
\end{equation*}
\begin{equation*}
   \operatorname{size}(\mathbf{P}):=\sup_{\mathbf{T}\subseteq\mathbf{P}\text{ up-tree}}
   \Big(\frac{1}{\abs{I_{\mathbf{T}}}}\int\Babs{\sum_{P\in\mathbf{T}}\pair{f}{w_{P_d}}w_{P_d}}^q\ud x\Big)^{1/q}.
\end{equation*}

The `Tree Lemma' is the estimate below. We detail the proof, indicating the use of the 
UMD property at a point below.

\begin{proposition}\label{prop:treeLemma}
For each tree $\mathbf{T}$, we have
\begin{equation*}
  \sum_{P\in\mathbf{T}}\abs{\pair{f}{w_{P_d}}\pair{w_{P_d}}{g 1_{E_{P_u}}}}
  \lesssim\operatorname{size}(\mathbf{T})\operatorname{density}(\mathbf{T})\abs{I_T},
\end{equation*}
where
\begin{equation*}
	E_{P_u}:=E\cap\{x \;:\; N (x)\in\omega_{P_u}\}.
\end{equation*}
\end{proposition}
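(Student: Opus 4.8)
The plan is to run the standard time--frequency proof of the Carleson tree estimate, decomposing $\mathbf T$ into a down-tree and an up-tree and pinning down the single place where the UMD property of $Y$ is used. We may assume $\Norm g{L^\infty(\R_+;Y^*)}\le 1$. Fix a point $\xi\in\omega_{T_d}$ and split $\mathbf T=\mathbf T_d\sqcup\mathbf T_u$, where $\mathbf T_d=\{P\in\mathbf T:\xi\in\omega_{P_d}\}$ and $\mathbf T_u=\{P\in\mathbf T:\xi\in\omega_{P_u}\}$. Since $\omega_P\supseteq\omega_T$ for all $P\in\mathbf T$ and dyadic intervals sharing a point are nested, a short check shows that $\mathbf T_d$ is a down-tree with top $T$ (the $\omega_{P_d}$ nest upward from $\omega_{T_d}$) and $\mathbf T_u$ is an up-tree with top $T$ (the $\omega_{P_u}$ nest upward from $\omega_{T_u}$); we estimate the two corresponding partial sums separately. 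We shall use two pointwise bounds, valid for every bitile $P\le T$. Since $\{P\}$ is an up-tree with $\int\abs{w_{P_d}}^q\ud x=\abs{I_P}^{1-q/2}$, we get $\Norm{\pair f{w_{P_d}}}Y=\abs{I_P}^{1/2}\operatorname{size}(\{P\})\le\abs{I_P}^{1/2}\operatorname{size}(\mathbf T)$; and since $\abs{w_{P_d}}=\abs{I_P}^{-1/2}1_{I_P}$ and $E_{P_u}\subseteq E_P:=E\cap\{x:N(x)\in\omega_P\}$, testing the density with $P'=P$ gives $\Norm{\pair{w_{P_d}}{g1_{E_{P_u}}}}{Y^*}\le\abs{I_P}^{-1/2}\abs{I_P\cap E_{P_u}}\le\abs{I_P}^{1/2}\operatorname{density}(\mathbf T)$.

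For the down-tree the argument is elementary. Running the computation of Lemma~\ref{lem:treeIdentity} with the roles of the down- and up-tiles exchanged shows that $w_{P_d}=\epsilon_{PT}\,w_{T_d}^\infty\,\abs{I_P}^{-1/2}1_{I_P}$ for $P\in\mathbf T_d$ (these wave packets carry no oscillation beyond the fixed unimodular factor), and that the intervals $\omega_{P_u}$ attached to bitiles of $\mathbf T_d$ lying at distinct scales are pairwise disjoint; hence the sets $I_P\cap E_{P_u}$, $P\in\mathbf T_d$, are pairwise disjoint. Feeding the two pointwise bounds into $\sum_{P\in\mathbf T_d}\abs{\pair f{w_{P_d}}\pair{w_{P_d}}{g1_{E_{P_u}}}}\le\operatorname{size}(\mathbf T)\sum_{P\in\mathbf T_d}\abs{I_P\cap E_{P_u}}$ and summing over scales — using the disjointness together with the per-bitile density bound $\abs{I_P\cap E_{P_u}}\le\operatorname{density}(\mathbf T)\abs{I_P}$ — yields the desired bound $\lesssim\operatorname{size}(\mathbf T)\operatorname{density}(\mathbf T)\abs{I_T}$ for this piece.

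For the up-tree we use Lemma~\ref{lem:treeIdentity} directly: $\pair f{w_{P_d}}w_{P_d}=\pair F{h_{I_P}}h_{I_P}\,w_{T_u}^\infty$ with $F:=f\,w_{T_u}^\infty$ and $\Norm F{L^q(\R_+;Y)}=\Norm f{L^q(\R_+;Y)}$, so $\sum_{P\in\mathbf T_u}\pair f{w_{P_d}}w_{P_d}=w_{T_u}^\infty\,\Phi$, where $\Phi$ is the dyadic martingale projection of $F$ onto $\lspan\{h_{I_P}:P\in\mathbf T_u\}$; by the definition of $\operatorname{size}$ (applied to $\mathbf T_u$ and to each of its scale-truncated subtrees) one has $\Norm\Phi{L^q(\R_+;Y)}\le\operatorname{size}(\mathbf T)\abs{I_T}^{1/q}$, and likewise for each tail of $\Phi$. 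Choosing unimodular factors tile by tile, the sum of absolute values over $\mathbf T_u$ equals $\pair\Psi G$ with $G:=w_{T_u}^\infty g$ and $\Psi:=\sum_{P\in\mathbf T_u}\tau_P\pair f{w_{P_d}}h_{I_P}\,w_{T_u}^\infty 1_{E_{P_u}}$. Because the $\omega_{P_u}$ are nested, for a.e.\ $x$ the factor $1_{E_{P_u}}(x)$ equals $1_E(x)$ for every $P$ of scale at least $k_0(x):=\min\{k:N(x)\in\omega_{P_u}\text{ for }\abs{I_P}=2^{-k}\abs{I_T}\}$ and vanishes otherwise; thus $\Psi(x)=1_E(x)\,w_{T_u}^\infty(x)$ times the tail, started at scale $k_0(x)$, of the signed dyadic martingale transform of $\Phi$ at $x$, so that $\Norm{\Psi(x)}{Y}\le 1_E(x)\,(M\Phi)(x)$ with $M$ the associated maximal truncated martingale transform. \emph{This is the one point where the UMD property of $Y$ enters:} after the routine adjustment making the multipliers predictable, $M$ is bounded on $L^q(\R_+;Y)$. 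It then remains to organize $\pair\Psi G$ by the stopping scale: the set $\{x\in I_T:k_0(x)=j\}$ meets each scale-$j$ interval $I_P$ ($P\in\mathbf T_u$) in a subset of $I_P\cap E_{P_u}$, of measure $\le\operatorname{density}(\mathbf T)\abs{I_P}$; distributing the $L^q$-mass of $M\Phi$ over these sets, applying H\"older's inequality on each scale-$j$ piece, and summing in $j$ with the scale-truncated $\operatorname{size}$ bound produces exactly one power of $\operatorname{density}(\mathbf T)$ and gives $\pair\Psi G\lesssim\operatorname{size}(\mathbf T)\operatorname{density}(\mathbf T)\abs{I_T}$.

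The main obstacle is precisely this up-tree estimate: the region of integration $E_{P_u}$ depends on the bitile, so the $\operatorname{size}$/$\BMO$ machinery cannot be applied directly to $\sum_{P\in\mathbf T_u}\pair f{w_{P_d}}w_{P_d}$. The way around it is the layer-cake decomposition in the stopping scale $k_0(x)$ together with the $L^q(Y)$-boundedness of the maximal martingale transform (the sole use of UMD), the residual difficulty being the bookkeeping that turns the superficially two-index estimate into a clean bound carrying a single factor $\operatorname{density}(\mathbf T)$.
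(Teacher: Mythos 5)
Your overall architecture matches the paper's: split the tree into the down-part $\mathbf T_d=\{P\in\mathbf T:P\le_d T\}$ and the up-part $\mathbf T_u$, handle $\mathbf T_d$ by $L^\infty$ bounds on the individual summands and disjointness of the sets $E_{P_u}$, handle $\mathbf T_u$ by Lemma~\ref{lem:treeIdentity} and a maximal/martingale-transform argument, with the UMD property entering exactly once (to kill the choice of unimodular signs in the $L^q$ norm of a tree sum). So the decomposition and the single use of UMD are both in the right place.

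The genuine gap is the bookkeeping that is supposed to produce the factor $\operatorname{density}(\mathbf T)$, in \emph{both} the down-tree and the up-tree pieces. For the down-tree you reduce to $\operatorname{size}(\mathbf T)\sum_{P\in\mathbf T_d}\abs{I_P\cap E_{P_u}}$ and assert that ``disjointness together with the per-bitile density bound'' gives $\sum_{P\in\mathbf T_d}\abs{I_P\cap E_{P_u}}\lesssim\operatorname{density}(\mathbf T)\abs{I_T}$. But neither ingredient, nor their naive combination, yields this. Disjointness of the sets $I_P\cap E_{P_u}$ gives only $\sum_P\abs{I_P\cap E_{P_u}}=\bigl\lvert\bigcup_P(I_P\cap E_{P_u})\bigr\rvert\le\abs{E\cap I_T}$, which is not controlled by $\operatorname{density}(\mathbf T)$ (density constrains $\abs{I_{P'}\cap E_{P'}}/\abs{I_{P'}}$ only for \emph{bitiles} $P'\ge P$, i.e.\ rectangles of area $2$, and the union $\bigcup_P\bigl(I_P\times\omega_{P_u}\bigr)$ has the wrong aspect ratio). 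The per-bitile bound $\abs{I_P\cap E_{P_u}}\le\operatorname{density}(\mathbf T)\abs{I_P}$ summed over scales yields $\operatorname{density}(\mathbf T)\sum_P\abs{I_P}$, which can be as large as $\operatorname{density}(\mathbf T)\cdot\abs{I_T}\cdot(\text{number of scales})$; the disjointness of $I_P\cap E_{P_u}$ does not convert this into a geometric gain because it is a disjointness of the \emph{targets}, not a stopping condition on the time intervals. Exactly the same issue recurs in the up-tree when you ``organize by the stopping scale $k_0(x)$'' and apply H\"older scale by scale: the total measure $\sum_j\abs{\{k_0=j\}\cap E\cap I_T}$ is again bounded only by $\abs{E\cap I_T}$, not by $\operatorname{density}(\mathbf T)\abs{I_T}$.

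The paper's proof resolves this with the covering $\mathscr J$ of maximal dyadic $J$ that contain no $I_P$, and the crucial Lemma~\ref{lem:GJ}: for each $J$ one passes to the parent $\hat J$, and chooses the dyadic frequency interval $\hat\omega$ of size exactly $2/\abs{\hat J}$ with $\omega_{\tilde P}\supseteq\hat\omega\supseteq\omega_T$; then $\hat P:=\hat J\times\hat\omega$ is a genuine bitile with $\hat P\ge\tilde P\in\mathbf T$, and \emph{all} the relevant $\omega_{P_u}$ (for $P$ with $I_P\supsetneq J$, hence $\abs{\omega_P}\le\abs{\hat\omega}$) sit inside $\hat\omega$, so $G_J\subseteq J\cap E_{\hat P}$. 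This is what converts the density supremum into a usable bound $\abs{G_J}\le 2\operatorname{density}(\mathbf T)\abs{J}$, after which the $J$'s are disjoint and summing over $\mathscr J$ gives $\sum_J\abs{G_J}\le 2\operatorname{density}(\mathbf T)\abs{I_T}$. Your proposal has no analogue of this step, and without it I do not see how to close either part of the argument. The minor observation that $w_{P_d}=\epsilon_{PT}w_{T_d}^\infty\abs{I_P}^{-1/2}1_{I_P}$ for $P\in\mathbf T_d$ is correct but unused — only $\abs{w_{P_d}}=\abs{I_P}^{-1/2}1_{I_P}$ is needed there.
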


Let $\mathscr{J}$ be the collection of maximal dyadic intervals $J\subseteq \bigcup_{P\in\mathbf{T}}I_P\subseteq I_T$ which do not contain any $I_P$, $P\in\mathbf{T}$. These intervals cover the set $\bigcup_{P\in\mathbf{T}}I_P$. Hence, for a choice of complex numbers $\abs{\epsilon_P}=1$,
\begin{equation}\label{eq:splitDomain}
\begin{split}
   \sum_{P\in\mathbf{T}}\abs{\pair{f}{w_{P_d}}\pair{w_{P_d}}{g 1_{E_{P_u}}}}
   &\leq\BNorm{\sum_{P\in\mathbf{T}}\epsilon_P\pair{f}{w_{P_d}}w_{P_d}1_{E_{P_u}}}{L^1(\R_+;X)} \\
   =\sum_{J\in\mathscr{J}}\BNorm{\cdots}{L^1(J;X)}
   &=\sum_{J\in\mathscr{J}}\BNorm{\sum_{\substack{P\in\mathbf{T} \\ I_P\supsetneq J}}\epsilon_P\pair{f}{w_{P_d}}w_{P_d}1_{E_{P_u}}}{L^1(J;X)}.
 \end{split}
\end{equation}

\begin{lemma}\label{lem:GJ}
For a fixed $J\in\mathscr{J}$, the subset
\begin{equation*}
  G_J:=J\cap\bigcup_{\substack{P\in\mathbf{T}\\ I_P\supsetneq J}} E_{P_u}
\end{equation*}
satisfies $\abs{G_J}\leq 2\operatorname{density}(\mathbf{T})\abs{J}$.
\end{lemma}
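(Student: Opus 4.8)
The plan is to bound $|G_J|$ by estimating, for each point $x \in G_J$, how many bitiles $P \in \mathbf{T}$ with $I_P \supsetneq J$ can satisfy $x \in E_{P_u}$, and then to relate the relevant union to a single tile above $\mathbf{T}$ on which the density bound applies. First I would observe that if $x \in G_J$, then $N(x) \in \omega_{P_u}$ for some $P \in \mathbf{T}$ with $I_P \supsetneq J$. Since $J$ is contained in $I_P$ and $J$ does not contain $I_P$ (indeed $I_P \supsetneq J$ strictly), and all such $I_P$ are dyadic intervals strictly containing the dyadic interval $J$, they are linearly ordered by inclusion; consequently their frequency intervals $\omega_{P_u}$ are also linearly ordered (nested), since for a tree with top $T$ we have $\omega_{P} \supseteq \omega_T$ for all $P$, and among bitiles of the tree whose time intervals are nested, the frequency intervals are nested the opposite way. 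So the union $\bigcup_{P \in \mathbf{T},\, I_P \supsetneq J}\omega_{P_u}$ is itself an interval, in fact equals $\omega_{P^\ast_u}$ for the bitile $P^\ast \in \mathbf{T}$ with the largest time interval $I_{P^\ast} \supsetneq J$ among those in the tree (or more precisely, for the one whose up-tile frequency interval is largest).

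The next step is to pick a single tile that dominates the situation. Let $P^\circ$ be a tile with $I_{P^\circ} = J$ and $\omega_{P^\circ} = \bigcup_{P \in \mathbf{T},\, I_P \supsetneq J}\omega_{P_u}$; I need to check this is a genuine tile, i.e.\ that $|I_{P^\circ}| \cdot |\omega_{P^\circ}| = 1$ and that $\omega_{P^\circ}$ is dyadic of the right scale — this holds because the union of the nested dyadic $\omega_{P_u}$'s, each of length $|I_P|^{-1} \le |J|^{-1}$, is the largest of them, which has length $|J^\ast|^{-1}$ where $J^\ast = I_{P^\ast}$ is the smallest time interval strictly containing $J$, i.e.\ the dyadic parent of $J$; so in fact $|\omega_{P^\circ}| = 2/|J|$. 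To fix this I would instead take $P^\circ$ to be the tile with $I_{P^\circ}$ equal to the dyadic parent of $J$ (call it $\hat J$) and $\omega_{P^\circ} = \omega_{P^\ast_u}$, which has area $1$; then $P^\circ \geq P^\ast \geq$ (any $P$ with $I_P \supsetneq J$, in the sense that $\omega_{P^\circ} \subseteq \omega_{P_u} \subseteq \omega_P$ and $I_{P^\circ} = \hat J \subseteq I_P$, so $P^\circ \ge P$). Then $G_J \subseteq J \cap \{x : N(x) \in \omega_{P^\circ}\} \subseteq I_{P^\circ} \cap E_{P^\circ}$, whence
\begin{equation*}
  |G_J| \le |I_{P^\circ} \cap E_{P^\circ}| = \frac{|I_{P^\circ} \cap E_{P^\circ}|}{|I_{P^\circ}|}\,|I_{P^\circ}| \le \operatorname{density}(\mathbf{T})\,|\hat J| = 2\operatorname{density}(\mathbf{T})\,|J|,
\end{equation*}
using that $P^\circ \ge P^\ast \ge P$ for some $P \in \mathbf{T}$, so $P^\circ$ is one of the tiles $P'$ admitted in the supremum defining $\operatorname{density}$.

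I expect the main obstacle to be verifying cleanly that the frequency intervals $\{\omega_{P_u} : P \in \mathbf{T},\ I_P \supsetneq J\}$ are linearly ordered and that their union is exactly $\omega_{P^\ast_u}$ for a well-defined $P^\ast$ in the tree — this requires being careful about the definition of the partial order $\le$ and the tree structure, and about the edge case where no $P \in \mathbf{T}$ has $I_P \supsetneq J$ (in which case $G_J = \varnothing$ and the bound is trivial). A secondary subtlety is ensuring the tile $P^\circ$ I construct is legitimately $\ge$ some tile of $\mathbf{T}$ so that the density supremum genuinely applies; taking $I_{P^\circ} = \hat J$ rather than $J$ is what makes this work, at the cost of the harmless factor $2$.
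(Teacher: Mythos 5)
Your central structural claim---that the up-tile frequency intervals $\omega_{P_u}$, ranging over $P\in\mathbf{T}$ with $I_P\supsetneq J$, are nested, so that their union equals $\omega_{P^\ast_u}$ for a single $P^\ast$---is false in general. The full bitile intervals $\omega_P$ \emph{are} nested (all are dyadic, all contain $\omega_T$, with lengths $2/\abs{I_P}$ determined by the nested $I_P$), but their upper halves $\omega_{P_u}$ need not be, and can in fact be pairwise disjoint. Concretely: take $T$ with $I_T=[0,8)$, $\omega_T=[0,\tfrac14)$, and $P_1,P_2\leq T$ with $I_{P_1}=[0,4)$, $\omega_{P_1}=[0,\tfrac12)$ and $I_{P_2}=[0,2)$, $\omega_{P_2}=[0,1)$; then $\omega_{P_{1,u}}=[\tfrac14,\tfrac12)$ and $\omega_{P_{2,u}}=[\tfrac12,1)$ are disjoint. (Both $P_1,P_2\leq_d T$, and the Tree Lemma concerns a general tree $\mathbf{T}$, not an up-tree, so this does occur.) Consequently no tile $P^\circ$ of area $1$ with $I_{P^\circ}=\hat J$---whose frequency interval has length only $1/\abs{\hat J}$---can contain all the relevant $\omega_{P_u}$. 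The paper's argument therefore works with the \emph{bitile} $\hat P:=\hat J\times\hat\omega$ of area $2$, with $\hat\omega$ the dyadic interval of length $2/\abs{\hat J}$ containing $\omega_T$; one shows $\omega_P\subseteq\hat\omega$ (not merely $\omega_{P_u}\subseteq$ a half of $\hat\omega$) for every $P$ contributing to $G_J$. This is harmless because the density is defined with $E_{P'}=E\cap\{N\in\omega_{P'}\}$ for bitiles $P'\geq P$, so the whole $\omega_{P'}$ is what enters.

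There is a secondary gap in the ordering argument. To apply the density supremum you need $P^\circ\geq P''$ for some $P''\in\mathbf{T}$, but you deduce $P^\circ\geq P$ from $I_{P^\circ}=\hat J\subseteq I_P$, which is the wrong direction ($P^\circ\geq P$ requires $I_{P^\circ}\supseteq I_P$). Your intended anchor is $P^\ast\in\mathbf{T}$ with $I_{P^\ast}=\hat J$, but such a bitile need not belong to $\mathbf{T}$. The paper resolves this by invoking the maximality of $J\in\mathscr{J}$, which guarantees some $\tilde P\in\mathbf{T}$ with $I_{\tilde P}\subseteq\hat J$, and then checking directly that $\tilde P\leq\hat P\leq T$, so the supremum defining $\operatorname{density}(\mathbf{T})$ legitimately applies to $\hat P$.
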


\begin{proof}
	Consider the dyadic parent $\hat{J}$ of $J$. By maximality of $J$, we have $\hat{J}\supseteq I_{\tilde P}$ for some $\tilde{P}\in\mathbf{T}$. Let $\hat{\omega}$ be the dyadic interval of size $2/\abs{\hat{J}}$ such that $\omega_{\tilde{P}}\supseteq\hat\omega\supseteq\omega_{T}$, where $T$ is the top of $\mathbf{T}$, so that the bitile  $\hat{P}:=\hat{J}\times\hat\omega$ satisfies $\tilde{P}\leq\hat{P}\leq T$. Now we claim that
\begin{equation}\label{eq:GJsubset}
  G_J\subseteq J\cap E_{\hat{P}}.
\end{equation}

Indeed, consider one of the $P$ appearing in $G_J$. Then $P\in\mathbf{T}$, thus $I_P\subseteq I_T$ and $\omega_P\supseteq\omega_T$, and also $I_P\supsetneq J$, thus $I_P\supseteq \hat{J}$. We also have $\abs{\omega_P}=2/\abs{I_P}\leq 2/\abs{\hat{J}}=\abs{\hat\omega}$, and $\omega_P\cap\hat\omega\supseteq\omega_T\neq\varnothing$, hence $\omega_P\subseteq\hat\omega$. But this means that
\begin{equation*}
  E_{P_u}=E\cap\{N\in\omega_{P_u}\}\subseteq E\cap\{N\in \omega_P\}\subseteq E\cap\{N\in\hat\omega\}=E_{\hat{P}},
\end{equation*}
which proves the claim~\eqref{eq:GJsubset}.

The proof is completed as follows, recalling that $\hat{P}\geq\tilde{P}\in\mathbf{T}$:
\begin{equation*}
\begin{split}
  \abs{G_J}\leq\abs{J\cap E_{\hat{P}}}
  &\leq\abs{\hat{J}}\frac{\abs{\hat{J}\cap E_{\hat{P}}}}{\abs{\hat{J}}}
  =2\abs{J}\frac{\abs{I_{\hat{P}}\cap E_{\hat{P}}}}{\abs{I_{\hat P}}} \\
  &\leq 2\abs{J}\sup_{P'\geq\tilde{P}}\frac{\abs{I_{P'}\cap E_{P'}}}{\abs{I_{P'}}}
  \leq 2\abs{J}\operatorname{density}(\mathbf{T}).\qedhere
\end{split}
\end{equation*}
\end{proof}

Next, divide $\mathbf{T}$ into the down- and up-trees
\begin{equation*}
  \mathbf{T}_d:=\{P\in\mathbf{T}:P\leq_d T\},\qquad \mathbf{T}_u:=\mathbf{T}\setminus\mathbf{T}_d,
\end{equation*}
and write
\begin{equation*}
  F_{jJ}:=\sum_{\substack{P\in\mathbf{T}_j\\ I_P\supsetneq J}}\epsilon_J\pair{f}{w_{P_d}}w_{P_d}1_{E_{P_u}},\qquad j\in\{d,u\}.
\end{equation*}

\begin{lemma}
\begin{equation*}
  \Norm{F_{dJ}}{L^1(J;X)}\leq \operatorname{size}(\mathbf{T})\abs{G_J}.
\end{equation*}
\end{lemma}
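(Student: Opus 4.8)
The plan is to estimate $\Norm{F_{dJ}}{L^1(J;X)}$ by restricting attention to the sub-tree of $\mathbf{T}_d$ consisting of those $P$ with $I_P\supsetneq J$, and to exploit the down-tree structure so that the wave packets behave like martingale differences at scales coarser than $|J|$. First I would observe that for $P\in\mathbf{T}_d$ with $I_P\supsetneq J$, the down-tile version of Lemma~\ref{lem:treeIdentity} (applied to the down-tree $\mathbf{T}_d$ with top $T$) gives $w_{P_d}=\epsilon_{PT}w_{T_d}^\infty h_{I_P}$, so that $w_{P_d}$ is, up to a unimodular constant and the fixed modulating factor $w_{T_d}^\infty$, just a Haar function $h_{I_P}$; moreover each such $h_{I_P}$ is constant on $J$ (since $I_P\supsetneq J$ means $J$ is strictly contained in a dyadic interval on which $h_{I_P}$ is constant). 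Consequently the restriction of $\sum_{P\in\mathbf{T}_d,\,I_P\supsetneq J}\epsilon_P\pair{f}{w_{P_d}}w_{P_d}$ to $J$ is a \emph{constant} vector in $X$, call it $c_J$, multiplied by $w_{T_d}^\infty$ which has modulus $1$; hence $\Norm{F_{dJ}}{L^1(J;X)}\leq |J|\,\abs{c_J}_X$ — note the $1_{E_{P_u}}$ factors only shrink the set of integration, so dropping them is harmless for an upper bound, or alternatively one bounds by $|G_J|$ directly once one sees the integrand is supported (after intersecting with the $E_{P_u}$'s) in $G_J\cap J$.

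The key point is then to identify $|c_J|_X$ with the average over $J$ of a truncated tree sum and bound it by $\operatorname{size}(\mathbf{T})$. Since the tail sum is constant on $J$, for any dyadic $J'\subseteq J$ we have $c_J = \frac{1}{|J'|}\int_{J'} w_{T_d}^{\infty}\,\overline{w_{T_d}^{\infty}}\cdot(\text{tail sum})$, but more usefully: choosing $\epsilon_P$ to realize the $L^1(J)$ norm as the modulus of a single vector, I would write
\begin{equation*}
  \abs{c_J}_X = \frac{1}{|J|}\BNorm{\int_J \sum_{\substack{P\in\mathbf{T}_d\\ I_P\supsetneq J}}\epsilon_P\pair{f}{w_{P_d}}w_{P_d}\ud x}{X},
\end{equation*}
and recognize the integrand as the restriction to $J$ of a genuine up-tree sum (a down-tree is in particular built from tiles, and after the Haar reduction it is a martingale transform whose partial sums one controls). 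Concretely, one extends the sum $\sum_{P\in\mathbf{T}_d,\,I_P\supsetneq J}\pair{f}{w_{P_d}}w_{P_d}$ to a full up-tree (or uses that its average on $J$ equals the average on $J$ of $\sum_{P\in\mathbf{T}_d}\pair{f}{w_{P_d}}w_{P_d}$, because the omitted terms with $I_P\subseteq J$ integrate to something controlled — here the Haar/martingale structure forces their average over $J$ to vanish or to telescope into the definition of size). Then by definition of $\operatorname{size}$ with exponent $q$ and Hölder's inequality on $J$,
\begin{equation*}
  \abs{c_J}_X \leq \Big(\frac{1}{|J|}\int_J \Babs{\sum_{P\in\mathbf{T}_d}\pair{f}{w_{P_d}}w_{P_d}}^q\ud x\Big)^{1/q}\leq \operatorname{size}(\mathbf{T}).
\end{equation*}
Combining, $\Norm{F_{dJ}}{L^1(J;X)}\leq \operatorname{size}(\mathbf{T})|J|$; to get the sharper bound with $|G_J|$ in place of $|J|$, one keeps the $1_{E_{P_u}}$ factors: on $J$ each $w_{P_d}1_{E_{P_u}}$ vanishes off $E_{P_u}\subseteq G_J$, so the whole integrand is supported in $G_J$, and one integrates the constant-modulus estimate only over $G_J$.

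The main obstacle I anticipate is the bookkeeping in passing from the \emph{truncated} sum $\sum_{I_P\supsetneq J}$ to a quantity directly comparable to $\operatorname{size}(\mathbf{T})$: one must argue that restricting to $I_P\supsetneq J$ and then averaging over $J$ loses nothing, which is exactly where the martingale-difference nature of the Haar functions $h_{I_P}$ (supplied by Lemma~\ref{lem:treeIdentity}) is used — the terms with $I_P\subseteq J$ have mean zero on $J$, or more precisely their contribution is captured by a sub-average that is again dominated by size. A secondary subtlety is that $\operatorname{size}$ is defined via up-trees, while here we have the down-tree $\mathbf{T}_d$; but a down-tree sum, after the Lemma~\ref{lem:treeIdentity}-type reduction for down-tiles, is handled identically, and in any case one can dominate the down-tree partial sum on $J$ by passing to the up-tree $\mathbf{T}$ it sits inside. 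Everything else — Hölder, dropping indicators, the unimodularity of $w_{T_d}^\infty$ — is routine.
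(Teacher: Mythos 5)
Your approach has a fundamental error at the very first step: the ``down-tile version of Lemma~\ref{lem:treeIdentity}'' does not give $w_{P_d}=\epsilon_{PT}\,w_{T_d}^\infty\,h_{I_P}$. If you run the computation in the proof of Lemma~\ref{lem:treeIdentity} for a down-tree (so $P_d\leq T_d$ and $n_T$ even), the relevant bit $n_k$ of $n_T$ is forced to equal $0$ rather than $1$; the step marked $(*)$ in that proof, which manufactures the extra Rademacher $r_k(\cdot/\abs{I_T})=r_0(\cdot/\abs{I_P})$ and hence the Haar factor $h_{I_P}$, is not available. What you get instead is $w_{P_d}=\epsilon_{PT}\,w_{T_d}^\infty\cdot\abs{I_P}^{-1/2}1_{I_P}$, i.e.\ the $L^2$-normalized \emph{scaling} function, not the Haar function. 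So there is no martingale-difference (mean-zero) structure in the down-tree sum, and none of the cancellation or telescoping you appeal to later is there.

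This wrecks the quantitative core of your argument even in the corrected form. If you drop the indicators $1_{E_{P_u}}$, then on $J$ the truncated sum is indeed $w_{T_d}^\infty$ times a constant vector $c_J$, but that constant is $\sum_{P:\,I_P\supsetneq J}\pm\pair{f}{w_{P_d}}\abs{I_P}^{-1/2}$: a sum of roughly $\log_2(\abs{I_T}/\abs{J})$ terms, each of which is bounded by $\operatorname{size}(\mathbf{T})$ (the singleton up-tree bound), but with no disjointness or sign pattern to prevent them from adding up. Your proposed H\"older step against $\operatorname{size}$ controls a single up-tree average, not this sum; there is a genuine $\log$ loss, and the claimed bound $\abs{c_J}_X\leq\operatorname{size}(\mathbf{T})$ is false in general. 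There is also an internal inconsistency: with the indicators kept, $F_{dJ}$ is supported in $G_J$ but is \emph{not} a constant multiple of $w_{T_d}^\infty$ on $J$; with the indicators dropped it is constant on $J$ but no longer supported in $G_J$. You cannot have both.

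The paper's proof rests on a structural observation you do not use and which is in fact the whole point: for distinct $P,P'\in\mathbf{T}_d$ the down-intervals $\omega_{P_d},\omega_{P'_d}$ are nested, so the larger of them contains the whole bitile $\omega_{P'}$, and hence $\omega_{P_u}\cap\omega_{P'_u}=\varnothing$. Therefore the sets $E_{P_u}$ occurring in $F_{dJ}$ are \emph{pairwise disjoint}, so at each point at most one summand is nonzero, and
\begin{equation*}
  \Norm{F_{dJ}}{\infty}\leq\sup_{P\in\mathbf{T}_d,\ I_P\supsetneq J}\frac{\abs{\pair{f}{w_{P_d}}}}{\abs{I_P}^{1/2}}\leq\operatorname{size}(\mathbf{T}),
\end{equation*}
the last step via the singleton up-tree $\{P\}$ in the definition of $\operatorname{size}$. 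Combined with $\supp(1_J F_{dJ})\subseteq G_J$ this gives the lemma immediately. The indicator functions you propose to discard are precisely what produces the pointwise estimate; without them the bound is simply not true.
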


\begin{proof}
Suppose that $P,P'\in\mathbf{T}_d$ appear in the same sum $F_{dJ}$. Then $\omega_{P_d},\omega_{P_d'}\supseteq\omega_{T_d}$. If $\omega_{P_d}$ is the larger of the two, then $\omega_{P_d}\supsetneq\omega_{P_d'}$ and hence $\omega_{P_d}\supseteq\omega_{P'}$. Thus $\omega_{P_u}$ is disjoint from $\omega_{P'}$ and a fortiori from $\omega_{P_u'}$. And in particular the sets $E_{P_u}=E\cap\{N\in P_u\}$ and $E_{P_u'}$ are disjoint. Thus
\begin{equation*}
  \Norm{F_{dJ}}{\infty}
  =\sup_{\substack{P\in\mathbf{T}_d\\ I_P\supsetneq J}}\Norm{\pair{f}{w_{P_d}}w_{P_d}1_{E_{P_u}}}{\infty} 
  \leq\sup_{\substack{P\in\mathbf{T}_d\\ I_P\supsetneq J}}\frac{\abs{\pair{f}{w_{P_d}}}}{\abs{I_P}^{1/2}}\leq\operatorname{size}(\mathbf{T}).
\end{equation*}
Since $1_JF_{dJ}$ is supported on $G_J$, the claim follows.
\end{proof}

\begin{lemma}
\begin{equation*}
	\Norm{F_{uJ}}{L^1(J;X)}\leq 2\abs{G_J}\inf_{x\in J} M\tilde{f} (x),\qquad\tilde{f}:=\sum_{P\in\mathbf{T}_u}\epsilon_P\pair{f}{w_{P_d}}w_{P_d}.
\end{equation*}
\end{lemma}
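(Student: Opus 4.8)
The plan is to exploit the structure of an up-tree to rewrite $\tilde f$ via the tree identity of Lemma~\ref{lem:treeIdentity}, reducing $F_{uJ}$ to something controlled by a dyadic maximal function evaluated away from $J$. First I would recall from Lemma~\ref{lem:treeIdentity} that for $P\in\mathbf{T}_u$ (an up-tree with top $T$), $w_{P_d}=\epsilon_{PT}\,w_{T_u}^\infty\cdot h_{I_P}$, so that $\tilde f=w_{T_u}^\infty\cdot\sum_{P\in\mathbf{T}_u}\pm\pair{f\cdot w_{T_u}^\infty}{h_{I_P}}h_{I_P}$ is a (signed) martingale transform of $f\cdot w_{T_u}^\infty$ adapted to the dyadic intervals $\{I_P\}$. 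The key combinatorial point is that in the sum defining $F_{uJ}$ we only keep those $P\in\mathbf{T}_u$ with $I_P\supsetneq J$, i.e.\ $I_P\supseteq\hat J$; hence $h_{I_P}$ is constant on $\hat J$, and in particular constant on $J$. Therefore the partial sum $\sum_{P\in\mathbf{T}_u, I_P\supsetneq J}\pm\pair{f\cdot w_{T_u}^\infty}{h_{I_P}}h_{I_P}$ equals, on $J$, the conditional expectation of $\tilde f\cdot(w_{T_u}^\infty)^{-1}=\sum_{P\in\mathbf{T}_u}\pm\pair{\cdot}{h_{I_P}}h_{I_P}$ onto $\hat J$ — or rather, its value is the average over $\hat J$ of a tail of the martingale transform.

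More precisely, I would argue as follows. Since all $h_{I_P}$ with $I_P\supsetneq J$ are constant on $\hat J$, the function $F_{uJ}$ restricted to $J$ equals $w_{T_u}^\infty$ times a constant $c_J$, where $c_J$ is the average over $\hat J$ of $\sum_{P\in\mathbf{T}_u}\pm\pair{f\cdot w_{T_u}^\infty}{h_{I_P}}h_{I_P}$ (the terms with $I_P\subseteq J$ or $I_P$ incomparable to $\hat J$ integrate to zero over $\hat J$, the latter because such $I_P$ are disjoint from $\hat J$ by maximality of $J$, the former because $h_{I_P}$ has mean zero). Thus $|c_J|\leq \fint_{\hat J}|\tilde f|\leq \inf_{x\in J} M\tilde f(x)$, using $\Norm{w_{T_u}^\infty}{\infty}=1$ so that $|\tilde f|=|\sum_{P\in\mathbf{T}_u}\pm\pair{\cdot}{h_{I_P}}h_{I_P}|$ pointwise, and using $|\hat J|=2|J|$ together with $J\subseteq\hat J$ to pass to the uncentered (or centered) dyadic maximal function at points of $J$. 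Finally, $1_J F_{uJ}$ is supported on $G_J$ (the set where some relevant $E_{P_u}$ is hit), exactly as in the $F_{dJ}$ lemma, so integrating over $J$ gives $\Norm{F_{uJ}}{L^1(J;X)}\leq |c_J|\,|G_J|\leq |G_J|\inf_{x\in J}M\tilde f(x)$; the factor $2$ in the statement absorbs the $|\hat J|/|J|$ ratio if one prefers to bound $\fint_{\hat J}$ by $2\fint$ at a point of $J$ after a harmless enlargement, or simply comes from $|\hat J|=2|J|$.

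The main obstacle, and the point requiring care, is the interchange between the pointwise sum over $P$ with $I_P\supsetneq J$ and the conditional-expectation/averaging description on $\hat J$: one must check that the contribution to $\fint_{\hat J}\tilde f$ of the discarded terms (those $P\in\mathbf{T}_u$ with $I_P\subseteq \hat J$) really vanishes. For $I_P\subsetneq\hat J$ this is the mean-zero property of $h_{I_P}$; for $I_P=\hat J$ there is no such $P$ in an up-tree with this $J$ maximal (since $\hat J\supseteq I_{\tilde P}$ would then force $I_P\not\supsetneq J$ issues) — more simply, if $I_P=\hat J$ then $I_P\supsetneq J$ and the term is kept. One should also confirm that $w_{T_u}^\infty$ does not destroy the support property, which it does not since it is $1_{I_T}$-valued in modulus. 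Everything else is the same bookkeeping as in the preceding lemma, and no UMD hypothesis is needed here.
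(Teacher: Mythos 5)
There is a genuine gap: you have effectively dropped the indicator functions $1_{E_{P_u}}$ from the definition of $F_{uJ}$, and they are the heart of the matter. Your central claim --- ``the function $F_{uJ}$ restricted to $J$ equals $w_{T_u}^\infty$ times a constant $c_J$'' --- is false. The factors $w_{P_d}(x)$ are indeed constant on $J$ for the relevant $P$ (your observation about $h_{I_P}$ is correct), but each term in $F_{uJ}$ also carries $1_{E_{P_u}}(x)$, a function of $x$ through the linearizing choice $N(x)$, and \emph{different} bitiles $P$ carry \emph{different} such indicators. Thus at different points $x\in J$ a different subcollection of terms survives, and $F_{uJ}$ is genuinely non-constant on $J$. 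Consequently there is no single $c_J$, and the bound $\Norm{F_{uJ}}{L^1(J;X)}\le |c_J|\,|G_J|$ does not follow. Your explanation of the factor $2$ as coming from $|\hat J|/|J|$ is a symptom of the same issue.

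The correct mechanism, which the paper uses, is the \emph{nestedness} of the sets $E_{P_u}$ for $P$ ranging over an up-tree: since $\omega_{P_u}\supseteq\omega_{T_u}$ for all $P\in\mathbf{T}_u$, the frequency intervals $\omega_{P_u}$ are pairwise nested, hence so are the sets $E_{P_u}=E\cap\{N\in\omega_{P_u}\}$. For a fixed $x\in J$ with $F_{uJ}(x)\ne0$ there is therefore a threshold interval $I_x\supseteq J$ such that $1_{E_{P_u}}(x)=1$ exactly for $P\in\mathbf{T}_u$ with $I_P\subseteq I_x$. So the sum defining $F_{uJ}(x)$ runs over $J\subsetneq I_P\subseteq I_x$, which, after the tree identity, is $w_{T_u}^\infty(x)\,(\mathbb{E}_J-\mathbb{E}_{I_x})\tilde g(x)$ where $\tilde g$ is the full martingale transform; the factor $2$ comes from the \emph{two} conditional expectations, both over intervals containing $J$, giving $|F_{uJ}(x)|\le 2\sup_{I\supseteq J}\fint_I|\tilde f|\le 2\inf_J M\tilde f$. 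Only then does one invoke the support inclusion $\supp(1_JF_{uJ})\subseteq G_J$, which you did handle correctly, as you did the observation that no UMD hypothesis is needed at this stage. Your plan needs to reinstate the indicators and the nestedness/truncation step; without it the argument does not establish the pointwise bound on $|F_{uJ}|$ over $J$.
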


\begin{proof}
Consider a fixed $x\in J$ with $F_{uJ}(x)\neq 0$. For the bitiles $P\in\mathbf{T}_u$, the sets $\omega_{P_u}$ are nested, and hence so are the sets $E_{P_u}$. The condition that $1_{E_{P_u}}(x)\neq 0$ is hence satisfied by all $P\in\mathbf{T}_u$ with $\omega_P$ large enough, hence $I_P$ not too large, say $I_P\subseteq I_x$. Thus
\begin{equation*}
\begin{split}
  F_{uJ}(x)&=\sum_{\substack{P\in\mathbf{T}_u\\ J\subsetneq I_P\subseteq I_x}}\epsilon_P\pair{f}{w_{P_d}}w_{P_d}(x) \\
   &=\sum_{\substack{P\in\mathbf{T}_u\\ J\subsetneq I_P\subseteq I_x}}\epsilon_P\epsilon_{PT}\pair{f}{w_{P_d}}w_{T_u}^\infty(x) h_{I_P}(x) \\
   &=w_{T_u}^\infty(x)(\mathbb{E}_J-\mathbb{E}_{I_x})\Big(
     \sum_{P\in\mathbf{T}_u}\epsilon_P\epsilon_{PT}\pair{f}{w_{P_d}}h_{I_P}\Big)(x) \\
   &=w_{T_u}^\infty(x)(\mathbb{E}_J-\mathbb{E}_{I_x})\Big(w_{T_u}^\infty
     \sum_{P\in\mathbf{T}_u}\epsilon_P\pair{f}{w_{P_d}}w_{P_d}\Big)(x).
\end{split}
\end{equation*}
By the unimodularity of $w_{T_u}^\infty$, from here we deduce that
\begin{equation*}
   \abs{F_{uJ}(x)} \leq 2\sup_{I\supseteq J}\fint_I\Babs{\sum_{P\in\mathbf{T}_u}\epsilon_P\pair{f}{w_{P_d}}w_{P_d}(y)}\ud y
  \leq 2\inf_J M\Big(\sum_{P\in\mathbf{T}_u}\epsilon_P\pair{f}{w_{P_d}}w_{P_d}\Big),
\end{equation*}
and the claim follows by using again that $\supp 1_J F_{uJ}\subseteq G_J$.
\end{proof}

We substitute these estimates to \eqref{eq:splitDomain}:
\begin{equation*}
\begin{split}
  \sum_{P\in\mathbf{T}} &\abs{\pair{f}{w_{P_d}}\pair{w_{P_d}}{g 1_{E_{P_u}}}}
   \leq\sum_{J\in\mathscr{J}}\Norm{F_{dJ}+F_{uJ}}{L^1(J;X)} \\
   &\leq\sum_{J\in\mathscr{J}}\abs{G_J}(\operatorname{size}(\mathbf{T})+2\inf_J M\tilde{f}) \\
   &\leq\sum_{J\in\mathscr{J}}2\operatorname{density}(\mathbf{T})\abs{J}\Big(\operatorname{size}(\mathbf{T})+2\inf_J M\tilde{f}\Big) \\
   &\leq 2\operatorname{density}(\mathbf{T})\operatorname{size}(\mathbf{T})\abs{I_T}
      +4\operatorname{density}(\mathbf{T})\int_{I_T}\ M\tilde{f}(x)\ud x.
\end{split}
\end{equation*}
The proof of Proposition~\ref{prop:treeLemma} is completed by
\begin{equation*}
\begin{split}
   \int_{I_T}\ M\tilde{f}(x)\ud x
   &\leq\abs{I_T}^{1/q'}\Norm{M\tilde{f}}{q}
   \leq C\abs{I_T}^{1/q'}\Norm{\tilde{f}}{q} \\
   &=C\abs{I_T}^{1/q'}\BNorm{\sum_{P\in\mathbf{T}_u}\epsilon_P\pair{f}{w_{P_d}}w_{P_d}}{L^q(\R_+;X)} \\
   &\overset{(*)}{\leq} C\abs{I_T}^{1/q'}\BNorm{\sum_{P\in\mathbf{T}_u}\pair{f}{w_{P_d}}w_{P_d}}{L^q(\R_+;X)} \\
   &\leq C\abs{I_T}^{1/q'}\abs{I_T}^{1/q}\operatorname{size}(\mathbf{T}),
\end{split}
\end{equation*}
where $(*)$ was an application of the UMD property, observing that
\begin{equation*}
   w_{T_u}^\infty\sum_{P\in\mathbf{T}_u}\epsilon_P\pair{f}{w_{P_d}}w_{P_d}
   =\sum_{P\in\mathbf{T}_u}\epsilon_P\epsilon_{PT}\pair{f}{w_{P_d}}h_{I_P}
\end{equation*}
is a martingale transform of the similar expression with all $\epsilon_P\equiv 1$.

\section{The density lemma}

\begin{proposition}
Every finite set $\mathbf{P}$ of bitiles has a disjoint decomposition
\begin{equation*}
  \mathbf{P}=\mathbf{P}_{\operatorname{sparse}}\cup\bigcup_j \mathbf{T}_j,
\end{equation*}
where each $\mathbf{T}_j$ is a tree, and
\begin{equation*}
  \operatorname{density}(\mathbf{P}_{\operatorname{sparse}})\leq 2^{-q}\operatorname{density}(\mathbf{P}),\qquad
  \sum_j\abs{I_{\mathbf{T}_j}}\leq 2^q\operatorname{density}(\mathbf{P})^{-1}\abs{E}.
\end{equation*}
\end{proposition}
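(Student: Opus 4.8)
The plan is to carry out the standard density (stopping-time) decomposition of time--frequency analysis in the present notation. Put $\delta:=\operatorname{density}(\mathbf{P})$; one may assume $\delta>0$, the case $\delta=0$ being trivial (then $\mathbf{P}$ has no heavy bitiles in the sense below and one takes $\mathbf{P}_{\operatorname{sparse}}=\mathbf{P}$). Call a bitile $P\in\mathbf{P}$ \emph{heavy} if some bitile $P'\geq P$ (not necessarily in $\mathbf{P}$) satisfies $\abs{I_{P'}\cap E_{P'}}>2^{-q}\delta\abs{I_{P'}}$, let $\mathbf{P}_{\operatorname{sparse}}$ consist of the non-heavy bitiles, and write $\mathbf{P}_{\operatorname{heavy}}:=\mathbf{P}\setminus\mathbf{P}_{\operatorname{sparse}}$. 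Directly from the definition of $\operatorname{density}$: for $P\in\mathbf{P}_{\operatorname{sparse}}$, every $P'\geq P$ obeys $\abs{I_{P'}\cap E_{P'}}\leq 2^{-q}\delta\abs{I_{P'}}$, so $\operatorname{density}(\mathbf{P}_{\operatorname{sparse}})\leq 2^{-q}\delta$; this is the first required bound, and it remains to cover $\mathbf{P}_{\operatorname{heavy}}$ by trees $\mathbf{T}_j$ with $\sum_j\abs{I_{\mathbf{T}_j}}\leq 2^q\delta^{-1}\abs{E}$.

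For each $P\in\mathbf{P}_{\operatorname{heavy}}$ I would fix a witness $\pi(P)\geq P$ with $\abs{I_{\pi(P)}\cap E_{\pi(P)}}>2^{-q}\delta\abs{I_{\pi(P)}}$; since $\mathbf{P}$ is finite, only finitely many bitiles occur as witnesses, so maximal elements (with respect to $\leq$) exist in any set of witnesses. Then run a greedy selection: while unremoved heavy bitiles remain, pick a currently unremoved heavy bitile $P_i$ whose witness $T_i:=\pi(P_i)$ is maximal, with respect to $\leq$, in the set of witnesses of all currently unremoved heavy bitiles; record $T_i$, and delete every currently unremoved heavy $P$ with $P\leq T_i$. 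At least $P_i$ is deleted at each step, so this terminates, yielding tops $T_1,\dots,T_m$ whose full trees $\{P:P\leq T_j\}$ together cover $\mathbf{P}_{\operatorname{heavy}}$. Disjointify by setting $\mathbf{T}_j:=\{P\in\mathbf{P}_{\operatorname{heavy}}:P\leq T_j\text{ and }P\not\leq T_i\text{ for }i<j\}$: each $\mathbf{T}_j$ is a tree with top $T_j$ (in particular $I_{\mathbf{T}_j}=I_{T_j}$), the $\mathbf{T}_j$ partition $\mathbf{P}_{\operatorname{heavy}}$, and hence $\mathbf{P}=\mathbf{P}_{\operatorname{sparse}}\cup\bigcup_j\mathbf{T}_j$ is the desired disjoint decomposition.

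The point of choosing the tops maximally is that they are pairwise \emph{incomparable}. Indeed, if $i<j$, then $T_j=\pi(P_j)$ where $P_j$ was still unremoved at step $i$, so $T_j$ was an available witness at step $i$; maximality of $T_i$ then forbids $T_i<T_j$, whereas $T_i\geq T_j$ (including $T_i=T_j$) would give $P_j\leq T_j\leq T_i$, so $P_j$ would have been deleted at step $i$, a contradiction. Incomparability of the tops forces the sets $I_{T_j}\cap E_{T_j}=\{x\in I_{T_j}:N(x)\in\omega_{T_j}\}$ to be pairwise disjoint: were some $x$ in two of them, the dyadic intervals $I_{T_j},I_{T_{j'}}$, both containing $x$, would be nested, and the dyadic intervals $\omega_{T_j},\omega_{T_{j'}}$, both containing $N(x)$, would be nested, and the area-$2$ relation $\abs{\omega_T}=2/\abs{I_T}$ would then pin down one of $T_j\leq T_{j'}$ or $T_{j'}\leq T_j$. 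Since these disjoint sets lie in $E$ and $\abs{I_{T_j}\cap E_{T_j}}>2^{-q}\delta\abs{I_{T_j}}$,
\begin{equation*}
  \sum_j\abs{I_{\mathbf{T}_j}}=\sum_j\abs{I_{T_j}}\leq 2^q\delta^{-1}\sum_j\abs{I_{T_j}\cap E_{T_j}}\leq 2^q\delta^{-1}\abs{E},
\end{equation*}
which is the second required bound. I expect the one genuinely delicate step to be exactly this chain---maximal choice of tops $\Rightarrow$ incomparability of the tops $\Rightarrow$ pairwise disjointness of the sets $I_{T_j}\cap E_{T_j}$---with everything else being bookkeeping.
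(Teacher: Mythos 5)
Your proposal is correct and follows essentially the same stopping-time argument as the paper: same definition of $\mathbf{P}_{\operatorname{sparse}}$, selection of maximal witness bitiles as tree tops, and the same key step that incomparability of the tops forces the sets $I_{T_j}\cap E_{T_j}\subseteq E$ to be pairwise disjoint, yielding the counting bound. The only (cosmetic) difference is that you run a sequential greedy selection and explicitly disjointify the trees, whereas the paper extracts all maximal witnesses at once and takes $\mathbf{T}_j=\{P\in\mathbf{P}:P\leq T_j\}$ (leaving the trivial disjointification implicit); both are fine.
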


\begin{proof}
Necessarily, we need to set
\begin{equation*}
  \mathbf{P}_{\operatorname{sparse}}
  :=\Big\{P\in\mathbf{P}: \sup_{P'\geq P}\frac{\abs{I_{P'}\cap E_{P'}}}{\abs{I_{P'}}} \leq 2^{-q}\operatorname{density}(\mathbf{P})\Big\}.
\end{equation*}
For every $P\in\mathbf{P}\setminus\mathbf{P}_{\operatorname{sparse}}$, we pick some bitile $P'$ such that
\begin{equation*}
  \frac{\abs{I_{P'}\cap E_{P'}}}{\abs{I_{P'}}} >2^{-q}\operatorname{density}(\mathbf{P}).
\end{equation*}
Let $T_j$ be the maximal bitiles (with respect to their partial order $\leq$) among these chosen $P'$, and let
\begin{equation*}
  \mathbf{T}_j:=\{P\in\mathbf{P}:P\leq T_j\}
\end{equation*}
be the tree in $\mathbf{P}$ with top $T_j$. Then
\begin{equation*}
  \mathbf{P}\setminus\mathbf{P}_{\operatorname{sparse}}=\bigcup_j\mathbf{T}_j.
\end{equation*}

Observe that the sets $I_{T_j}\cap E_{T_j}=I_{T_j}\cap E\cap\{N\in\omega_{T_j}\}$, which are all contained in $E$, are pairwise disjoint. Indeed, if two such sets intersected, then so would the corresponding bitiles $T_j=I_{T_j}\times\omega_{T_j}$, and then one of them could not be maximal. Thus we have
\begin{equation*}
  \sum_j\abs{I_{T_j}}
  \leq 2^q\operatorname{density}(\mathbf{P})^{-1}\sum_j\abs{I_{T_j}\cap E_{T_j}}\leq 2^q\operatorname{density}(\mathbf{P})^{-1}\abs{E}.\qedhere
\end{equation*}
\end{proof}

\section{The size lemma}

\begin{proposition}
Let $X$ be a UMD space with tile-type $q$. Then every finite set $\mathbf{P}$ of bitiles has a disjoint decomposition
\begin{equation*}
  \mathbf{P}=\mathbf{P}_{\operatorname{small}}\cup\bigcup_j \mathbf{T}_j,
\end{equation*}
where each $\mathbf{T}_j$ is a tree, and
\begin{equation*}
  \operatorname{size}(\mathbf{P}_{\operatorname{small}})\leq\tfrac12\operatorname{size}(\mathbf{P}),\qquad
  \sum_j\abs{I_{\mathbf{T}_j}}\leq C\operatorname{size}(\mathbf{P})^{-q}\Norm{f}{L^q(\R_+;X)}^q.
\end{equation*}
\end{proposition}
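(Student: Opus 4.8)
The plan is to follow the standard selection scheme for the size, mimicking the density lemma but replacing the single-scale density quantity with the tree-size functional, and to invoke tile-type at the one crucial juncture where the trees must be packed. First I would set
\begin{equation*}
  \mathbf{P}_{\operatorname{small}}:=\Big\{P\in\mathbf{P}: \sup_{\mathbf{T}\ni P\ \text{up-tree},\ \mathbf{T}\subseteq\mathbf{P}}
  \Big(\tfrac{1}{\abs{I_{\mathbf{T}}}}\int\bigl|\textstyle\sum_{Q\in\mathbf{T}}\pair{f}{w_{Q_d}}w_{Q_d}\bigr|^q\Big)^{1/q}\leq\tfrac12\operatorname{size}(\mathbf{P})\Big\},
\end{equation*}
so that the first conclusion is automatic. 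For each $P\in\mathbf{P}\setminus\mathbf{P}_{\operatorname{small}}$ there is an up-tree $\mathbf{T}^P\subseteq\mathbf{P}$ containing $P$ whose $L^q$-average strictly exceeds $\tfrac12\operatorname{size}(\mathbf{P})$. I would then run a greedy selection: among all such witnessing up-trees, repeatedly pick one, $\mathbf{T}^1,\mathbf{T}^2,\dots$, with maximal (or nearly maximal) $\abs{I_{\mathbf{T}^i}}$, remove from the pool every bitile already covered, enlarge each $\mathbf{T}^i$ to a full tree $\mathbf{T}_i$ with the same top, and continue until $\mathbf{P}\setminus\mathbf{P}_{\operatorname{small}}$ is exhausted. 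This is exactly parallel to the density lemma; the one feature I must arrange, which is special to the size, is that the \emph{selected} up-trees $\mathbf{T}^i$ be made pairwise disjoint in their down-tiles, i.e.\ the family $\{(P,\mathbf{T}^i):P\in\mathbf{T}^i\}$ forms an admissible collection $\mathscr{T}$ in the sense of the tile-type definition. Here one uses that the $w_{Q_d}$ appearing in distinct selected trees can be taken with disjoint $\omega_{Q_d}$ (by maximality of the chosen tops and the usual tree/up-tree geometry), so after discarding overlaps the remaining down-tiles are disjoint as rectangles and hence have disjoint time supports on the relevant set.

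The heart of the matter is the packing bound $\sum_i\abs{I_{\mathbf{T}_i}}\lesssim\operatorname{size}(\mathbf{P})^{-q}\Norm{f}{L^q}^q$. With the selected up-trees $\{\mathbf{T}^i\}=:\mathscr{T}$ arranged to be down-tile-disjoint, the lower bound on each tree gives
\begin{equation*}
  \tfrac{1}{2^q}\operatorname{size}(\mathbf{P})^q\abs{I_{\mathbf{T}^i}}
  \leq\BNorm{\sum_{Q\in\mathbf{T}^i}\pair{f}{w_{Q_d}}w_{Q_d}}{L^q(\R_+;X)}^q,
\end{equation*}
and summing in $i$, the right-hand side is controlled by $\Norm{f}{L^q(\R_+;X)}^q$ precisely by the tile-type $q$ inequality applied to $\mathscr{T}$. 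Since $\abs{I_{\mathbf{T}_i}}=\abs{I_{\mathbf{T}^i}}$ by construction (we only enlarged the trees, keeping the top), this yields the claimed bound with $C=2^q$ times the tile-type constant.

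The main obstacle I anticipate is the combinatorial bookkeeping in the selection step: ensuring simultaneously that (a) the enlarged trees $\mathbf{T}_i$ genuinely partition $\mathbf{P}\setminus\mathbf{P}_{\operatorname{small}}$, (b) the selected sub-up-trees $\mathbf{T}^i$ retain the strict size lower bound after deletions, and (c) the down-tiles across different $\mathbf{T}^i$ are disjoint so that the tile-type hypothesis is applicable. Point (c) is where the geometry of up-trees and the maximality of the chosen frequency data must be used carefully — this is the analogue of the disjointness argument for $I_{T_j}\cap E_{T_j}$ in the density lemma, but now it must hold at the level of individual down-tiles rather than whole tree-tops, since tile-type is phrased in terms of a family of (bitile, tree) pairs. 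Once (c) is secured, the estimate is a one-line application of tile-type and everything else is the routine greedy-exhaustion argument already rehearsed in the density lemma.
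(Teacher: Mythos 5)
Your overall architecture matches the paper's: define a small collection where all up-trees have small $L^q$-average, peel off trees greedily, and pack their top intervals by a single application of tile-type, after verifying that the selected up-trees form an admissible family $\mathscr{T}$ (pairwise disjoint down-tiles). The final estimate
\begin{equation*}
  \sum_j\abs{I_{T_j}}\leq\frac{2^q}{\sigma^q}\sum_j\BNorm{\sum_{P\in\mathbf{T}_{j,u}}\pair{f}{w_{P_d}}w_{P_d}}{L^q(\R_+;X)}^q\lesssim\sigma^{-q}\Norm{f}{q}^q
\end{equation*}
is exactly the paper's.

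However, there is a genuine gap at the point you yourself flag as ``the heart of the matter''. Your greedy selection is by \emph{maximal spatial length} $\abs{I_{\mathbf{T}^i}}$, but this criterion does \emph{not} yield the required disjointness of down-tiles across distinct selected trees, and no amount of ``the usual tree/up-tree geometry'' repairs it. The paper selects at each step, among the maximal $\Delta$-large trees still available, the one whose top frequency interval $\omega_{\mathbf{T}}$ has \emph{minimal center} $c(\omega_{\mathbf{T}})$; this is a frequency-side criterion, not a time-side one, and it is precisely what drives the contradiction. Concretely: if $P_j\in\mathbf{T}_{j,u}$, $P_i\in\mathbf{T}_{i,u}$, $i\neq j$, and $P_{j,d}\leq P_{i,d}$ (the only way two down-tiles from up-trees can overlap), then $P_j\neq P_i$ forces $\omega_{P_i}\subseteq\omega_{P_{j,d}}$, whence $\omega_{T_i}\subseteq\omega_{P_{j,d}}$ and therefore
\begin{equation*}
  c(\omega_{T_j})=\inf\omega_{T_{j,u}}\geq\inf\omega_{P_{j,u}}=\sup\omega_{P_{j,d}}>c(\omega_{T_i}),
\end{equation*}
so $T_i$ was chosen \emph{earlier} ($i<j$). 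But then $P_j\leq P_i\leq T_i$ means $P_j$ should have been absorbed into the maximal tree $\mathbf{T}_i$ at step $i$, a contradiction. With your time-length criterion there is no way to deduce $i<j$ from the geometry of the overlap, so the argument does not close. Your suggestion to ``discard overlaps'' to force disjointness is also not viable: deleting tiles from $\mathbf{T}^i$ could destroy the lower bound $\Delta(\mathbf{T}^i)>\tfrac12\sigma$, which is exactly the worry you raise in point (b) without resolving it. Finally, defining $\mathbf{P}_{\operatorname{small}}$ up front rather than as the residue of the iteration creates bookkeeping problems (partially removed witness trees), which the paper sidesteps by simply iterating: at each stage consider maximal trees $\mathbf{T}\subseteq\mathbf{P}$ with $\Delta(\mathbf{T})>\tfrac12\sigma$ (where $\Delta$ averages over the up-tree part $\mathbf{T}_u$), choose the one with minimal $c(\omega_{\mathbf{T}})$, remove it, and let $\mathbf{P}_{\operatorname{small}}$ be whatever remains when no such tree exists.
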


\begin{proof}
For every tree $\mathbf{T}$, let
\begin{equation*}
  \Delta(\mathbf{T})^q:=\frac{1}{\abs{I_{T}}}\int\Babs{\sum_{P\in\mathbf{T}_u}\pair{f}{w_{P_d}}w_{P_d}(x)}^q\ud x,
\end{equation*}
where $T$ is the top of $\mathbf{T}$, and $\mathbf{T}_u:=\{P\in\mathbf{T}:P\leq_u T\}$ is the up-tree supported by the same top.

Let $\sigma:=\operatorname{size}(\mathbf{P})$. We extract the trees $\mathbf{T}_j$ recursively as follows: Consider all maximal trees $\mathbf{T}\subseteq\mathbf{P}$ among the ones with $\Delta(\mathbf{T})>\frac 12 \sigma   $. Among them, let $\mathbf{T}_1$ be one whose top frequency interval $\omega_{\mathbf{T}}$ has the minimal center $c(\omega_{\mathbf{T}})$. Replace $\mathbf{P}$ by $\mathbf{P}\setminus\mathbf{T}_j$, and iterate. When no trees can be chosen anymore, the remaining collection $\mathbf{P}_{\operatorname{small}}$ satisfies $\operatorname{size}(\mathbf{P}_{\operatorname{small}})\leq\tfrac12\sigma$ by definition.

The sum over the top intervals is immediately estimated by
\begin{equation*}
  \sum_j\abs{I_{T_j}}\leq\frac{2^q}{\sigma^q}\sum_j\BNorm{\sum_{P\in\mathbf{T}_{j,u}}\pair{f}{w_{P_d}}w_{P_d}}{L^q(\R_+;X)}^q.
\end{equation*}
The sum on the right is bounded by $C\Norm{f}{L^q(\R_+;X)}^q$ as a direct application of the tile-type $q$ inequality, as soon as we verify the required disjointness condition that
\begin{equation}\label{eq:disjForTreeType}
  P_j\in\mathbf{T}_{j,u},\ P_i\in\mathbf{T}_{i,u},\ i\neq j\qquad\Longrightarrow\qquad P_{j,d}\cap P_{i,d}=\varnothing.
\end{equation}

Suppose to the contrary that for instance $P_{j,d}\leq P_{i,d}$,  and hence $\omega_{P_{i,d}}\subseteq\omega_{P_{j,d}}$. Since $P_i\neq P_j$, in fact  $\omega_{P_i}\subseteq\omega_{P_{j,d}}$. Thus, we have
\begin{equation*}
  \omega_{T_i}\subseteq\omega_{P_i}\subseteq\omega_{P_{j,d}},\qquad   \omega_{T_{j,u}}\subseteq\omega_{P_{j,u}},
\end{equation*}
and hence
\begin{equation*}
  c(\omega_{T_j})=\inf\omega_{T_{j,u}}\geq\inf\omega_{P_{j,u}}=\sup\omega_{P_{j,d}}>c(\omega_{T_i}).
\end{equation*}
This means that the tree $\mathbf{T}_i$ was chosen first, thus $i<j$. But $P_{j,d}\leq P_{i,d}$ implies $P_j\leq P_i\leq T_i$, so that $P_j$ should have been taken to $\mathbf{T}_i$ by maximality. This gives a contradiction, proving the claim \eqref{eq:disjForTreeType}, and hence the proposition.
\end{proof}

By using the density and size lemmas consecutively, it is easy to obtain the following:

\begin{lemma}
Suppose that
\begin{equation*}
  \operatorname{density}(\mathbf{P}_n)\leq 2^{nq}\abs{E},\qquad \operatorname{size}(\mathbf{P}_n)\leq 2^n\Norm{f}{q}.
\end{equation*}
Then
\begin{equation*}
  \mathbf{P}_n=\mathbf{P}_{n-1}\cup\bigcup_j\mathbf{T}_{n,j},\qquad
  \sum_j\abs{I_{\mathbf{T}_{j,n}}}\leq C2^{-nq},
\end{equation*}
where $\mathbf{P}_{n-1}$ satisfies estimates similar to $\mathbf{P}_n$ with $n-1$ in place of $n$.
\end{lemma}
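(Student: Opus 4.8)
The plan is to combine the Density Lemma and the Size Lemma in a single step, normalizing the ambient quantities so that the two control parameters are aligned. First I would observe that without loss of generality we may rescale $f$ and $E$: replacing $f$ by $f/\Norm{f}{q}$ and renormalizing $E$ we reduce to the hypotheses $\operatorname{density}(\mathbf{P}_n)\le 2^{nq}$ and $\operatorname{size}(\mathbf{P}_n)\le 2^{n}$ (the constants $\abs{E}$ and $\Norm{f}{q}^q$ will simply be carried along and reappear in the conclusion, as written). The key point is that in the regime $n\le 0$, which is the relevant one for the telescoping argument that follows this lemma, the density bound $2^{nq}$ is already small, so one application of the Density Lemma to $\mathbf{P}_n$ splits off a tree part whose tops satisfy $\sum_j\abs{I_{\mathbf{T}_j}}\lesssim 2^q\operatorname{density}(\mathbf{P}_n)^{-1}\abs{E}\le 2^{q}2^{-nq}\abs{E}$, halving (in the exponential scale, down to $2^{(n-1)q}$) the density of the remainder.

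Next I would apply the Size Lemma to the resulting sparse-in-density remainder: this peels off another tree part with $\sum_j\abs{I_{\mathbf{T}_j}}\lesssim C\operatorname{size}(\mathbf{P})^{-q}\Norm{f}{q}^q\le C2^{-nq}\Norm{f}{q}^q$, and leaves a collection of size at most $\tfrac12\operatorname{size}(\mathbf{P}_n)\le 2^{n-1}\Norm{f}{q}$. Calling the union of the two extracted tree families $\bigcup_j\mathbf{T}_{n,j}$ and the final remainder $\mathbf{P}_{n-1}$, we have by construction that $\mathbf{P}_{n-1}$ has density at most $2^{(n-1)q}\abs{E}$ and size at most $2^{n-1}\Norm{f}{q}$ — exactly the hypotheses with $n$ replaced by $n-1$ — while the total top-interval measure of the extracted trees is $\sum_j\abs{I_{\mathbf{T}_{j,n}}}\lesssim 2^{-nq}(\abs{E}+\Norm{f}{q}^q)$, which after the normalization is $C2^{-nq}$ as claimed. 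Since each of the Density and Size decompositions produces finitely many trees from the finite collection $\mathbf{P}_n$, the union is again a finite family of trees, and the disjointness of $\mathbf{P}_{n-1}$ from the extracted trees is inherited from the disjointness built into each of the two lemmas.

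Honestly, there is no real obstacle here: this is the standard bootstrapping step in time-frequency arguments, and the only thing to be careful about is the order of application (density first, then size, so that the Size Lemma is applied to a collection whose density has already been controlled) and the bookkeeping of constants under normalization. One minor subtlety worth a sentence is that the Density and Size Lemmas as stated halve the respective quantities by a fixed multiplicative factor ($2^{-q}$ and $\tfrac12$) rather than passing from the bound at level $n$ to the bound at level $n-1$; since $2^{-q}\le 2^{-1}$ for $q\ge 2$ (which holds by Proposition~\ref{p.type}), the density decomposition in fact overachieves, and one simply uses the weaker conclusion $\operatorname{density}(\mathbf{P}_{\operatorname{sparse}})\le 2^{-q}\operatorname{density}(\mathbf{P})\le 2^{-1}\operatorname{density}(\mathbf{P})$, i.e.\ $\le 2^{(n-1)q}\abs{E}$ after one wants $2^{(n-1)q}$; here one should instead iterate the Density Lemma $q$ times, or simply note that a single application already gives density $\le 2^{-q}2^{nq}\abs{E}=2^{(n-1)q}\abs{E}$, which is precisely what is needed. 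This last identity is the cleanest route and is what I would write down.
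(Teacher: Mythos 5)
Your overall plan — apply the Density Lemma and then the Size Lemma, collect the extracted trees, and let $\mathbf{P}_{n-1}$ be the final remainder — is exactly what the paper has in mind when it says ``by using the density and size lemmas consecutively, it is easy to obtain the following.'' The structure is right, and the observation that the factor $2^{-q}$ in the Density Lemma exactly matches the step from $2^{nq}\abs{E}$ to $2^{(n-1)q}\abs{E}$ is also right.

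However, there is a genuine error in the two central chains of inequalities, and the muddled final paragraph never touches it. You write
\begin{equation*}
  \sum_j\abs{I_{\mathbf{T}_j}}\lesssim 2^q\operatorname{density}(\mathbf{P}_n)^{-1}\abs{E}\le 2^{q}2^{-nq}\abs{E}
  \quad\text{and}\quad
  \sum_j\abs{I_{\mathbf{T}_j}}\lesssim C\operatorname{size}(\mathbf{P})^{-q}\Norm{f}{q}^q\le C2^{-nq}\Norm{f}{q}^q,
\end{equation*}
but both second inequalities go the wrong way. The hypothesis is $\operatorname{density}(\mathbf{P}_n)\le 2^{nq}\abs{E}$, which gives $\operatorname{density}(\mathbf{P}_n)^{-1}\abs{E}\ge 2^{-nq}$, not $\le$; similarly $\operatorname{size}(\mathbf{P}_n)\le 2^n\Norm{f}{q}$ gives $\operatorname{size}(\mathbf{P}_n)^{-q}\Norm{f}{q}^q\ge 2^{-nq}$. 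If, say, $\operatorname{density}(\mathbf{P}_n)$ happens to be far below $2^{nq}\abs{E}$, the tree-mass bound in the Density Lemma as stated really does blow up, because the Density and Size Lemmas bound the extracted mass in terms of the \emph{actual} density/size of $\mathbf{P}$, not in terms of a prescribed threshold. So your argument, as written, does not yield $\sum_j\abs{I_{\mathbf{T}_{j,n}}}\le C2^{-nq}$.

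The missing step is a case split (or, equivalently, a reformulation of the two lemmas with an arbitrary threshold). For the density part: if already $\operatorname{density}(\mathbf{P}_n)\le 2^{(n-1)q}\abs{E}$, extract nothing; otherwise $2^{(n-1)q}\abs{E}<\operatorname{density}(\mathbf{P}_n)\le 2^{nq}\abs{E}$, and now applying the Density Lemma gives $\operatorname{density}(\mathbf{P}_{\operatorname{sparse}})\le 2^{-q}\operatorname{density}(\mathbf{P}_n)\le 2^{(n-1)q}\abs{E}$ and, using the \emph{lower} bound just established, $\sum_j\abs{I_{T_j}}\le 2^q\operatorname{density}(\mathbf{P}_n)^{-1}\abs{E}< 2^q\cdot 2^{-(n-1)q}=2^{2q}\cdot 2^{-nq}$. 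The size step is handled identically, splitting on whether $\operatorname{size}\le 2^{n-1}\Norm{f}{q}$ or not. With this correction the conclusion follows; your last paragraph about ``iterating the Density Lemma $q$ times'' is not the issue and should be dropped.
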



If $\mathbf{P}$ is any finite collection of bitiles, it satisfies such estimates for some large $n$. By iteration, we obtain the decomposition
\begin{gather*}
  \mathbf{P}=\bigcup_{n\in\Z}\bigcup_j\mathbf{T}_{n,j},\qquad
  \\
  \operatorname{density}(\mathbf{T}_{n,j})\leq 2^{nq}\abs{E},\qquad\operatorname{size}(\mathbf{T}_{n,j})\leq 2^n\Norm{f}{q}, \qquad
  \sum_j\abs{I_{\mathbf{T}_{n,j}}}\leq C2^{-nq}.
  \end{gather*}
Note that there is also the trivial bound $\operatorname{density}(\mathbf{P})\leq 1$ for any collection.
And then
\begin{equation*}
\begin{split}
  \sum_{P\in\mathbf{P}}\abs{\pair{f}{w_{P_d}}\pair{w_{P_d}}{g1_{E_{P_u}}}}
  &\leq\sum_{n\in\Z}\sum_j\sum_{P\in\mathbf{T}_{n.j}}\ldots \\
  &\lesssim\sum_{n\in\Z}\sum_j\min\{1,2^{nq}\abs{E}\}\cdot 2^n\Norm{f}{q}\cdot\abs{I_{\mathbf{T}_{n,j}}} \\
  &\lesssim\sum_{n\in\Z}\min\{1,2^{nq}\abs{E}\}\cdot 2^n\Norm{f}{q}\cdot 2^{-nq} \lesssim\abs{E}^{1/q'}\Norm{f}{q}.
\end{split}
\end{equation*}

This shows that $\Norm{S^*f}{L^{q,\infty}}\lesssim\Norm{f}{L^q(0,1;Y)}$, proving the pointwise convergence $S_N f(x)\to f(x)$ for all $f\in L^q(0,1;Y)$. Note that $L^q$, where $q$ is the tile-type of $Y$, takes the classical role of $L^2$ as the space where estimates are easier than in general $L^p$ spaces.

\section{General $p>1$}
In this section we write $ C= S _{N (x)}$ for the Carleson operator.
In order to obtain the estimate
\begin{equation*}
  \Norm{Cf}{L^p(\R_+;X)}\lesssim\Norm{f}{L^p(\R_+;X)}
\end{equation*}
for all $p\in(1,\infty)$, we need to somewhat refine the previous considerations. First, we make the standard reduction: by interpolation, it suffices to prove the bound
\begin{equation*}
  \Norm{Cf}{L^{p,\infty}(\R_+;X)}\lesssim\Norm{f}{L^{p,1}(\R_+;X)}
\end{equation*}
for all $p\in(1,\infty)$, which by duality and a well-known description of the Lorentz space $L^{p,1}$ is equivalent to
\begin{equation*}
  \abs{\pair{Cf}{g}}\lesssim\abs{F}^{1/p}\abs{E}^{1/p'}
\end{equation*}
for all $f\in L^\infty(F;X)$, $g\in  L^\infty(E;X^*)$ bounded by one, and all bounded measurable sets $E$ and $F$. Yet another reduction is the following: It suffices that for every $E$ and $F$, we can find a \emph{major subset} $\tilde{E}\subseteq E$ with $\abs{\tilde{E}}\geq\tfrac12\abs{E}$ so that the previous estimate holds for all  $f\in L^\infty(F;X)$, $g\in  L^\infty(\tilde{E};X^*)$.

\begin{lemma}\label{lem:E<F}
Let $\abs{E}\leq\abs{F}$. Then
\begin{equation*}
  \abs{\pair{Cf}{g}}\lesssim\abs{E}\Big(1+\log\frac{\abs{F}}{\abs{E}}\Big)\lesssim\abs{E}^{1/p}\abs{F}^{1/p'}
\end{equation*}
for all $f\in L^\infty(F;X)$ and $g\in L^\infty(E;X^*)$ bounded by one, and all $p\in(1,\infty)$.
\end{lemma}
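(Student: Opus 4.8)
The plan is to run the standard Carleson machinery (density lemma, size lemma, tree lemma) on a suitably truncated collection of bitiles, but with the key twist that we only need the crude bound where \(\abs{E}\le\abs{F}\), so the density side can be exploited. First I would fix \(E,F\) with \(\abs{E}\le\abs{F}\), \(f\in L^\infty(F;X)\), \(g\in L^\infty(E;X^*)\) both bounded by one, and reduce to a finite collection \(\mathbf P\) of bitiles so that \(\pair{Cf}{g}=\sum_{P\in\mathbf P}\pair{f}{w_{P_d}}\pair{w_{P_d}}{g1_{E_{P_u}}}\). The trivial bounds are \(\operatorname{size}(\mathbf P)\lesssim\Norm{f}{\infty}\abs{F}^{1/q}/\) (something) — more precisely, since \(f\) is supported on \(F\), applying the tile-type inequality to a single up-tree gives \(\operatorname{size}(\mathbf P)\lesssim\abs{I_{\mathbf T}}^{-1/q}\Norm{f1_F}{q}\lesssim(\abs{F}/\abs{I_{\mathbf T}})^{1/q}\), but the genuinely useful uniform bound is just \(\operatorname{size}(\mathbf P)\lesssim 1\) (from \(\Norm{f}{\infty}\le1\) and the \(\ell^\infty\)-in-\(\mathscr T\) estimate of the Remark after Lemma~\ref{lem:WonBMO}), and \(\operatorname{density}(\mathbf P)\le1\).

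Next I would run the size lemma and density lemma alternately, exactly as in the previous section, to obtain a decomposition \(\mathbf P=\bigcup_{n}\bigcup_j\mathbf T_{n,j}\) into trees with
\begin{equation*}
  \operatorname{size}(\mathbf T_{n,j})\lesssim\min\{1,2^n\},\qquad
  \operatorname{density}(\mathbf T_{n,j})\lesssim\min\{1,2^{-nq'}\cdot\text{(something)}\},\qquad
  \sum_j\abs{I_{\mathbf T_{n,j}}}\lesssim\min\{2^{-nq}\Norm{f}{q}^q,\ 2^{nq'}\abs{E}\cdot\text{(something)}\},
\end{equation*}
i.e.\ a two-sided control: when the size is large (small \(n\)) the total tree length is controlled by \(\Norm{f}{q}^q\lesssim\abs{F}\) via the size lemma, and when the density is large the total tree length is controlled by \(\abs{E}\) via the density lemma. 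Feeding this into the tree lemma (Proposition~\ref{prop:treeLemma}), each tree contributes \(\lesssim\operatorname{size}(\mathbf T_{n,j})\operatorname{density}(\mathbf T_{n,j})\abs{I_{\mathbf T_{n,j}}}\), and summing over \(j\) and then over \(n\in\Z\) produces a geometric-type series in which the two regimes overlap only over a range of \(n\) of length \(\sim\log(\abs F/\abs E)\); on that overlap each term is \(\lesssim\abs{E}\), giving the bound \(\abs{E}(1+\log(\abs F/\abs E))\). Finally, the elementary inequality \(t(1+\log(1/t))\lesssim_p t^{1/p}\) for \(t=\abs E/\abs F\le1\) (valid for every \(p\in(1,\infty)\)) upgrades this to \(\abs E^{1/p}\abs F^{1/p'}\).

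The main obstacle is bookkeeping the two-parameter stopping-time decomposition so that the \emph{same} family of trees simultaneously admits the size bound \(\sum_j\abs{I_{\mathbf T_{n,j}}}\lesssim 2^{-nq}\Norm f q^q\) and the density bound \(\sum_j\abs{I_{\mathbf T_{n,j}}}\lesssim 2^{nq'}\operatorname{density}^{-1}\abs E\) (with matched exponents so that the product \(\operatorname{size}\cdot\operatorname{density}\cdot\abs{I_{\mathbf T}}\) telescopes), and to track exactly where the logarithm appears — namely from summing \(\min\{2^{n},2^{-n}\cdot\text{const}\}\)-type terms whose crossover is at \(2^{2n}\sim\abs F/\abs E\). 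All the analytic inputs (tree lemma, size lemma, density lemma, and the elementary log estimate) are already available; only the combinatorial organization of the sum over scales \(n\) has to be done carefully, and the UMD/tile-type hypotheses enter only through the already-proved size and tree lemmas, not afresh here.
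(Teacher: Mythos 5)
Your proposal takes essentially the same route as the paper: observe the extra uniform bounds $\operatorname{size}(\mathbf P)\lesssim 1$ (from $\|f\|_\infty\le 1$, $\supp f\subseteq F$, and the $L^q$-boundedness of the martingale transform on a single up-tree) and $\operatorname{density}(\mathbf P)\le 1$; re-use the decomposition $\mathbf P=\bigcup_n\bigcup_j\mathbf T_{n,j}$ already produced by the alternating density/size stopping time; feed each tree into the Tree Lemma; and sum $\min\{1,2^{nq}|E|\}\min\{1,2^n|F|^{1/q}\}2^{-nq}$ over $n$, with the logarithm arising from the middle range, then upgrade via $t(1+\log(1/t))\lesssim_p t^{1/p}$. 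One clarification: the ``two-parameter bookkeeping obstacle'' you flag is already dispatched by the paper's existing iteration lemma, which produces a single family $\{\mathbf T_{n,j}\}$ with the density bound $\lesssim 2^{nq}|E|$, the size bound $\lesssim 2^n\|f\|_q$, and $\sum_j|I_{\mathbf T_{n,j}}|\lesssim 2^{-nq}$ all simultaneously (no extra matching of exponents is needed, and note your $q'$ exponents and the sign on $n$ in the density/top-interval bounds should all be $q$ with the same sign as the paper's); the crossover points are at $2^{nq}\sim|F|^{-1}$ and $2^{nq}\sim|E|^{-1}$, giving the stated $\log(|F|/|E|)$.
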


\begin{proof}
We observe an additional upper bound for every up-tree $\mathbf{T}$:
\begin{equation*}
\begin{split}
  \int\Babs{\sum_{P\in\mathbf{T}}\pair{f}{w_{P_d}}w_{P_d}(x)}^q\ud x
  &=\int\Babs{\sum_{P\in\mathbf{T}}\pair{fw_{T_u}^\infty}{h_{I_P}}h_{I_P}(x)}^q\ud x \\
  &\lesssim\Norm{f 1_{I_T}}{L^q(\R_+;X)}^q\leq\abs{I_T},
\end{split}
\end{equation*}
and hence $\operatorname{size}(\mathbf{P})\leq 1$. Thus
\begin{equation*}
\begin{split}
  \abs{\pair{Cf}{g}}
  &\lesssim \sum_{n\in\Z}\min\{1,2^{nq}\abs{E}\}\min\{1,2^n\abs{F}^{1/q}\}2^{-nq} \\
  &\leq\sum_{n:2^n\leq\abs{F}^{-1/q}}2^n\abs{E}\abs{F}^{1/q}
    +\sum_{n:\abs{F}^{-1/q}<2^n<\abs{E}^{-1/q}}\abs{E}
    +\sum_{n:\abs{E}^{-1/q}\leq 2^n}2^{-nq} \\
  &\lesssim\abs{E}\Big(1+\log\frac{\abs{F}}{\abs{E}}\Big).\qedhere
\end{split}
\end{equation*}
\end{proof}

The case $\abs{E}>\abs{F}$ is the more involved one. We need the following preparation:

\begin{lemma}\label{lem:sqFn}
Let $\mathscr{I}\subseteq\{I\in\mathscr{D}:\inf_I Mf\leq\lambda\}$ be a finite collection of dyadic intervals. Then
\begin{equation*}
  \BNorm{\sum_{\substack{I\in\mathscr{I}\\ I\subseteq K}}\pair{f}{h_I}h_I}{L^p(\R_+;X)}\lesssim \lambda\abs{K}^{1/p}.
\end{equation*}
\end{lemma}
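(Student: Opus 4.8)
The plan is to run a dyadic Calderón–Zygmund decomposition of $f$ at height $\lambda$ and to exploit the hypothesis on $\mathscr{I}$ in the following way: an interval $I\in\mathscr{I}$ can never sit strictly inside a Calderón–Zygmund interval, so the ``bad part'' of $f$ is annihilated by pairing with $h_I$, and the whole sum is driven by the bounded ``good part''.

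Concretely, I would first set $\Omega:=\{x:\ Mf(x)>\lambda\}$, with $M$ the dyadic maximal function, and write $\Omega=\bigcup_k Q_k$ as the disjoint union of its maximal dyadic subintervals. Decompose $f=g+b$ with $g:=f1_{\Omega^c}+\sum_k\ave{f}_{Q_k}1_{Q_k}$ and $b:=\sum_k b_k$, $b_k:=(f-\ave{f}_{Q_k})1_{Q_k}$. As usual $\Norm{g}{L^\infty(\R_+;X)}\lesssim\lambda$: on $\Omega^c$ one has $\abs{f}\le Mf\le\lambda$ a.e., and on $Q_k$ one has $\abs{\ave{f}_{Q_k}}\le\ave{\abs f}_{Q_k}\le 2\ave{\abs f}_{\widehat{Q_k}}\le 2\lambda$ by maximality of $Q_k$, where $\widehat{Q_k}$ is its dyadic parent.

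Next I would check that $\pair{b}{h_I}=0$ for every $I\in\mathscr{I}$. Indeed $\inf_I Mf\le\lambda$ produces a point of $I$ lying outside $\Omega$, so $I\not\subseteq Q_k$ for any $k$ (the case $I=Q_k$ being excluded, since $Mf>\lambda$ throughout $Q_k$); by dyadic nesting $I$ is, for each $k$, either disjoint from $Q_k$ or a strict dyadic superset of it, and in the latter case $h_I$ is constant on $Q_k$ while $\int b_k=0$. Hence $\pair{f}{h_I}=\pair{g}{h_I}$ for all $I\in\mathscr{I}$, and the sum in the lemma equals $\sum_{I\in\mathscr{I},\,I\subseteq K}\pair{g}{h_I}h_I$. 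Since $\{h_I:\ I\subseteq K\}\cup\{\abs{K}^{-1/2}1_K\}$ is an orthonormal basis of $L^2(K)$, the map $\sum_{I\subseteq K}c_I h_I\mapsto\sum_{I\subseteq K}1_{\{I\in\mathscr{I}\}}c_I h_I$ is a dyadic martingale transform on $K$ with $\{0,1\}$-symbol, hence bounded on $L^p(K;X)$ by the UMD property of $X$; applied to $\sum_{I\subseteq K}\pair{g}{h_I}h_I=1_K(g-\ave{g}_K)$ it yields $\sum_{I\in\mathscr{I},\,I\subseteq K}\pair{g}{h_I}h_I$. Therefore the left-hand side of the lemma is $\lesssim\Norm{1_K(g-\ave{g}_K)}{L^p(\R_+;X)}\le 2\Norm{g}{L^\infty}\abs{K}^{1/p}\lesssim\lambda\abs{K}^{1/p}$, as claimed.

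The argument is routine once the decomposition is in place; the one point I would single out for care is the vanishing $\pair{b}{h_I}=0$, i.e.\ that the constraint $\inf_I Mf\le\lambda$ is precisely what prevents the intervals of $\mathscr{I}$ from lying inside a Calderón–Zygmund interval, so that the bad part is invisible and everything reduces to the uniform bound on the good part together with the UMD martingale-transform estimate.
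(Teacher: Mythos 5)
Your argument is correct and reaches the desired estimate, but by a genuinely different route than the paper's. The paper sets $\tilde f:=\sum_{I\in\mathscr{I}}\pair{f}{h_I}h_I$, observes that $1_K(\tilde f-\ave{\tilde f}_K)$ is a martingale transform of $1_{\bigcup\mathscr{I}^*(K)}f$, where $\mathscr{I}^*(K)$ denotes the maximal members of $\mathscr{I}$ contained in $K$, and uses the UMD $L^1\to L^{1,\infty}$ martingale-transform bound together with $\inf_J Mf\leq\lambda$ for $J\in\mathscr{I}^*(K)$ to get $\Norm{1_K(\tilde f-\ave{\tilde f}_K)}{L^{1,\infty}(\R_+;X)}\lesssim\lambda\abs{K}$; the John--Str\"omberg inequality then upgrades this to $\Norm{\tilde f}{\BMO(\R_+;X)}\lesssim\lambda$, and the John--Nirenberg inequality yields the $L^p$ bound. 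You instead run a Calder\'on--Zygmund decomposition $f=g+b$ at height $\lambda$, use the hypothesis to show $\pair{b}{h_I}=0$ for every $I\in\mathscr{I}$ (no such $I$ can sit inside a CZ interval), and finish by applying the $L^p$-boundedness of the Haar martingale transform to $1_K(g-\ave{g}_K)$, where $\Norm{g}{\infty}\lesssim\lambda$. Both proofs are valid; yours is somewhat more elementary in that it bypasses John--Str\"omberg and John--Nirenberg and goes directly to $L^p$, while the paper's is more modular and produces the intermediate $\BMO$ statement as a by-product.

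One small point worth making explicit in your write-up: the step ``$\inf_I Mf\leq\lambda$ produces a point of $I$ lying outside $\Omega$'' tacitly uses that the dyadic maximal function attains its infimum over a dyadic interval $I$. This is true --- descending through children of $I$ with non-increasing averages one reaches a point $x_0\in I$ with $M_I f(x_0)=\fint_I\abs{f}$, so $\inf_I Mf=\max\bigl(\sup_{J\supsetneq I}\fint_J\abs{f},\ \fint_I\abs{f}\bigr)$ is attained --- but with the non-strict inequality $\leq$ in the hypothesis this attainment is doing real work and deserves a sentence, whereas a strict inequality $\inf_I Mf<\lambda$ would make the existence of such a point immediate.
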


\begin{proof}
Let
\begin{equation*}
  \tilde{f}:=\sum_{I\in\mathscr{I}}\pair{f}{h_I}h_I.
\end{equation*}
Then, denoting by $\mathscr{I}^*(K)$ the maximal elements $I\in\mathscr{I}$ with $I\subseteq K$,
\begin{equation*}
 1_K(\tilde{f}-\ave{\tilde{f}}_K)= \sum_{\substack{I\in\mathscr{I}\\I\subseteq K}}\pair{f}{h_I}h_I
  = \sum_{J\in\mathscr{I}^*(K)}\sum_{\substack{I\in\mathscr{I}\\ I\subseteq J} }\pair{f}{h_I}h_I,
\end{equation*}
which is a martingale transform of $1_{\bigcup\mathscr{I}^*(K)}f$. By the UMD property, these transforms are bounded from $L^1(\R_+;X)$ to $L^{1,\infty}(\R_+;X)$, and hence
\begin{equation*}
\begin{split}
  \Norm{1_K(\tilde{f}-\ave{\tilde{f}}_K)}{L^{1,\infty}(\R_+;X)}
  &\lesssim \Norm{1_{\bigcup\mathscr{I}^*(K)} f}{L^1(\R_+;X)} 
  \leq\sum_{J\in\mathscr{I}^*(K)}\Norm{1_J f}{L^1(\R_+;X)} \\
  &\leq\sum_{J\in\mathscr{I}^*(K)}\abs{J}\inf_J Mf
  \leq\sum_{J\in\mathscr{I}^*(K)}\abs{J}\lambda 	 \leq\lambda\abs{K}.
\end{split}
\end{equation*}
By the John--Str\"omberg inequality, we have $\Norm{\tilde{f}}{\BMO(\R_+;X)}\lesssim\lambda$, and then by the John--Nirenberg inequality that
\begin{equation*}
  \Norm{1_K(\tilde{f}-\ave{\tilde{f}}_K)}{L^p(\R_+;X)}\lesssim\lambda\abs{K}^{1/p}.\qedhere
\end{equation*}
\end{proof}

\begin{lemma}\label{lem:E>F}
Let $\abs{E}>\abs{F}$. Then there exists $\tilde{E}\subseteq E$ with $\abs{E}\leq 2\abs{\tilde{E}}$ such that
\begin{equation*}
  \abs{\pair{Cf}{g}}\lesssim \abs{F}\Big(1+\log\frac{\abs{E}}{\abs{F}}\Big). 
\end{equation*}
for all $f\in L^\infty(F;X)$ and $g\in L^\infty(\tilde{E};X^*)$ bounded by one.\end{lemma}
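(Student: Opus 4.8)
The plan is to handle the case $\abs{E}>\abs{F}$ by a stopping-time construction of the major subset $\tilde E$, following the pattern of the scalar Lacey--Thiele--Billard argument: since $E$ is large, we must first remove from $E$ the region where the maximal operator $Mf$ is comparatively large, and then split the phase plane into a ``small-density'' part treated by the argument of Section~5 and a ``high-density'' part treated by an extra exceptional-set estimate using Lemma~\ref{lem:sqFn}. Concretely, I would set
\begin{equation*}
  \Omega:=\Big\{x\in\R_+: Mf(x)>C\frac{\abs{F}}{\abs{E}}\Big\},\qquad \tilde E:=E\setminus\Omega,
\end{equation*}
and fix $C$ via the weak type $(1,1)$ of $M$ so that $\abs{\Omega}\le\tfrac12\abs{E}$, hence $\abs{E}\le 2\abs{\tilde E}$. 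Since $g$ is supported on $\tilde E$, every point of $E_{P_u}$ relevant to $\pair{Cf}{g}$ now lies outside $\Omega$; this is the mechanism that forces the troublesome tiles (those with $I_P$ having a large portion inside $\Omega$) to contribute nothing.

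Next I would rerun the density/size decomposition of Section~5 applied to the finite bitile collection $\mathbf P$, obtaining $\mathbf P=\bigcup_{n}\bigcup_j\mathbf T_{n,j}$ with $\operatorname{density}(\mathbf T_{n,j})\lesssim\min\{1,2^{nq}|\tilde E|\}$ and $\operatorname{size}(\mathbf T_{n,j})\lesssim\min\{1,2^{n}\abs{F}^{1/q}\}$ (the size bound $\operatorname{size}(\mathbf P)\le 1$ coming from the same computation as in Lemma~\ref{lem:E<F}), together with $\sum_j|I_{\mathbf T_{n,j}}|\lesssim 2^{-nq}$. The only new feature is that we must also control the trees for which the top interval $I_T$ overlaps $\Omega$ substantially. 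For those, I would throw away any bitile $P$ with $\abs{I_P\cap\Omega}>\tfrac12\abs{I_P}$ (these have $E_{P_u}\cap\tilde E$ small and can be absorbed), and for the surviving bitiles in a tree the interval $I_P$ satisfies $\inf_{I_P}Mf\lesssim\abs F/\abs E$, so Lemma~\ref{lem:sqFn} applies and gives the sharpened size bound $\operatorname{size}(\mathbf T)\lesssim\abs F/\abs E\cdot$ (uniformly), replacing the crude $\operatorname{size}\le 1$ whenever $2^n$ is small. In other words, we get the improved estimate $\operatorname{size}(\mathbf T_{n,j})\lesssim\min\{2^n\abs F^{1/q},\ \abs F/\abs E\}$ in the regime where the top intervals avoid the bad set.

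Finally I would sum the Tree Lemma (Proposition~\ref{prop:treeLemma}) contributions exactly as in Section~5 but with the two-sided bounds just obtained:
\begin{equation*}
  \abs{\pair{Cf}{g}}\lesssim\sum_{n\in\Z}\min\{1,2^{nq}\abs{E}\}\cdot\min\Big\{2^n\abs F^{1/q},\tfrac{\abs F}{\abs E}\Big\}\cdot 2^{-nq},
\end{equation*}
and split the sum over $n$ at the thresholds $2^n\sim\abs E^{-1/q}$ and $2^n\sim(\abs F/\abs E)\cdot\abs F^{-1/q}$ (equivalently where $2^n\abs F^{1/q}$ crosses $\abs F/\abs E$). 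The three resulting pieces are a geometric sum dominated by its largest term, a flat sum of $\sim\log(\abs E/\abs F)$ terms each of size $\sim\abs F$, and another geometric tail; adding them yields the claimed $\abs{F}(1+\log(\abs E/\abs F))$. The main obstacle, and the place requiring genuine care rather than bookkeeping, is the second step: verifying that after deleting the bitiles whose time interval is mostly inside $\Omega$, (a) the discarded part really is negligible for the pairing $\pair{Cf}{g}$ because $g$ lives on $\tilde E$ and the relevant $E_{P_u}$ are then essentially contained in $\Omega$, and (b) the remaining trees genuinely satisfy the hypothesis $\inf_I Mf\lesssim\abs F/\abs E$ of Lemma~\ref{lem:sqFn} so that the improved size bound is legitimate — this is where the UMD/tile-type machinery interlocks with the geometry of the stopping set.
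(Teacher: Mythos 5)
Your plan follows the same overall strategy as the paper (a maximal-function stopping set $\Omega$, the resulting major subset $\tilde E=E\setminus\Omega$, an improved size bound via Lemma~\ref{lem:sqFn}, and the same final summation over $n$), but there is a genuine gap precisely at the point you yourself flag as the ``main obstacle.'' You propose to discard bitiles with $\abs{I_P\cap\Omega}>\tfrac12\abs{I_P}$ and to argue that the discarded part is ``negligible'' and ``can be absorbed.'' No absorption argument is given, and in fact none is available at the level of generality you state: if $I_P$ merely overlaps $\Omega$ in measure more than half but is not contained in it, then $w_{P_d}1_{\tilde E}$ is not zero, and there is no a priori control on the resulting contribution. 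This is not a matter of bookkeeping but of choosing the right dichotomy.

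The clean version, which is what the paper does, is to split on $I_P\subseteq G$ versus $I_P\not\subseteq G$ where $G:=\{M(1_F)>2\abs F/\abs E\}$. Then both halves are handled without slack. If $I_P\subseteq G$, the term $\pair{w_{P_d}}{g1_{E_{P_u}}}$ vanishes \emph{identically}, because $w_{P_d}$ is supported in $I_P\subseteq G$ while $g$ is supported in $\tilde E\subseteq G^c$; there is nothing to absorb. If $I_P\not\subseteq G$, then $I_P$ contains a point outside $G$, so $\inf_{I_P}M(1_F)\le 2\abs F/\abs E$, and since $\abs{fw_{T_u}^\infty}_X\le 1_F$ this gives exactly the hypothesis $\inf_{I_P}M(fw_{T_u}^\infty)\lesssim\abs F/\abs E$ needed to apply Lemma~\ref{lem:sqFn} and obtain $\operatorname{size}(\mathbf P')\lesssim\abs F/\abs E$ for any subcollection $\mathbf P'$ of such bitiles. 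From there your final sum over $n$, with $\operatorname{size}$ bounded by $\min\{2^n\abs F^{1/q},\abs F/\abs E\}$ and density by $\min\{1,2^{nq}\abs E\}$, is correct and matches the paper's computation. So the fix is simply to replace your ``mostly inside $\Omega$'' cutoff by the all-or-nothing containment $I_P\subseteq G$; with that change your argument closes.

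A minor remark: you define $\Omega$ via $Mf$ while the paper uses $M(1_F)$; since $\abs{f}_X\le 1_F$ these are comparable for the purposes at hand, so this is not an error, but $M(1_F)$ is the cleaner choice because the same quantity reappears in the $\inf_{I_P}$ bound feeding Lemma~\ref{lem:sqFn}.
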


\begin{proof}
Let $G:=\{M(1_F)>2\abs{F}/\abs{E}\}$. Then $\abs{G}\leq\tfrac12\abs{E}$, and hence $\tilde{E}:=E\setminus G$ satisfies $\abs{\tilde{E}}\geq\tfrac12\abs{E}$. For $f$ and $g$ as in the assertion, we write
\begin{equation*}
  \sum_{P\in\mathbf{P}}\abs{\pair{f}{w_{P_d}}\pair{w_{P_d}}{g1_{E_{P_u}}}}
  =\sum_{\substack{P\in\mathbf{P}\\ I_P\not\subseteq G}}+ \sum_{\substack{P\in\mathbf{P}\\ I_P\subseteq G}},
\end{equation*}
and observe that the second sum vanishes. Indeed, $w_{P_d}$ is supported on $I_P\subseteq G$, and $g$ on $\tilde{E}\subseteq G^c$. For the first sum, we observe an additional upper bound for the size of any subset $\mathbf{P}'\subseteq\{P\in\mathbf{P}:I_P\not\subseteq G\}$: Let $\mathbf{T}\subseteq\mathbf{P}'$ be any up-tree with top $T$. Then for any $P\in\mathbf{T}$, we have
\begin{equation*}
  \inf_{I_P} M(f w_{T_u}^\infty)\leq\inf_{I_P} M(1_F)\leq 2\frac{\abs{F}}{\abs{E}},
\end{equation*}
since $I_P\not\subseteq G$. Hence, by Lemma~\ref{lem:sqFn},
\begin{equation*}
  \int\Babs{\sum_{P\in\mathbf{T}}\pair{f}{w_{P_d}}w_{P_d}(x)}^q\ud x
  =\int\Babs{\sum_{P\in\mathbf{T}}\pair{fw_{T_u}^\infty}{h_{I_P}}h_{I_P}(x)}^q\ud x
    \lesssim\Big(\frac{\abs{E}}{\abs{F}}\Big)^q\abs{I_T},
\end{equation*}
so that
\begin{equation*}
  \operatorname{size}(\mathbf{P}')\lesssim\frac{\abs{F}}{\abs{E}}.
\end{equation*}
Thus
\begin{equation*}
\begin{split}
  \abs{\pair{Cf}{g}} &\lesssim\sum_n\min\{1,2^{nq}\abs{E}\}\min\{\abs{F}/\abs{E},2^n\abs{F}^{1/q}\}\cdot 2^{-nq} \\
  &\leq\sum_{n:2^n\leq \abs{F}^{1/q'}/\abs{E}}\abs{E}\cdot 2^n\abs{F}^{1/q}
    +\sum_{n: \abs{F}^{1/q'}/\abs{E}<2^n<\abs{E}^{-1/q}}\abs{F} \\
   &\qquad +\sum_{n:\abs{E}^{-1/q}\leq 2^n}\abs{F}/\abs{E}\cdot 2^{-nq} \\
   &\lesssim\abs{F}\Big(1+\log\frac{\abs{E}}{\abs{F}}\Big).\qedhere
\end{split}
\end{equation*}
\end{proof}

Lemmas~\ref{lem:E<F} and \ref{lem:E>F} prove the reduced restricted weak-type estimate explained in the beginning of the section, and thereby complete the proof of our main Theorem~\ref{thm:main}.

\bibliographystyle{plain}

\end{document}